\theoremstyle{definition}
\newtheorem{theorem}{Theorem}[section]
\newtheorem{lemma}[theorem]{Lemma}
\newtheorem{definition}[theorem]{Definition}
\newcommand{\Diag}{\mbox{Diag}}
\newcommand{\Abs}{\mbox{Abs}}
\newcommand{\Log}{\mbox{{LogAbs}}}
\newcommand{\Exp}{\mbox{{Exp}}}
\newcommand{\adist}[2]{\mbox{{adist}\,[\,#1,~#2\,]}}
\newcommand{\Jn}{\mbox{${\bf J}_n$}}
\newcommand{\Imn}{\mbox{${\bf I}_n$}}
\newcommand{\Jm}{\mbox{${\bf J}_m$}}
\newcommand{\Imm}{\mbox{${\bf I}_m$}}
\newcommand{\Imat}{\mbox{${\bf I}$}}
\newcommand{\Amn}{\mbox{${\bf A}$}}
\newcommand{\Amni}{\mbox{${\bf A}^{\!\mbox{\tiny -1}}$}}
\newcommand{\Lmn}{\mbox{${\bf L}$}}
\newcommand{\Dmp}{\mbox{${\bf D}_{\mbox{\tiny +}}$}}
\newcommand{\Dmu}{\mbox{${\bf D}_{\mbox{\tiny u}}$}}
\newcommand{\Dmv}{\mbox{${\bf D}_{\mbox{\tiny v}}$}}
\newcommand{\Dmpi}{\mbox{${\bf D}_{\tiny +}^{\tiny -1}$}}
\newcommand{\Dmui}{\mbox{${\bf D}_{\tiny u}^*$}}
\newcommand{\Emp}{\mbox{${\bf E}_{\mbox{\tiny +}}$}}
\newcommand{\Empi}{\mbox{${\bf E}_{\tiny +}^{\tiny -1}$}}
\newcommand{\Emu}{\mbox{${\bf E}_{\mbox{\tiny u}}$}}
\newcommand{\Emui}{\mbox{${\bf E}_{\tiny u}^*$}}
\newcommand{\omc}{\mbox{${{\mathds 1}}_m$}}
\newcommand{\omr}{\mbox{${{\mathds 1}}_m^{\textnormal{\tiny{T}}}$}}
\newcommand{\onc}{\mbox{${{\mathds 1}}_n$}}
\newcommand{\onr}{\mbox{${\mathds 1}_n^{\textnormal{\tiny{T}}}$}}
\newcommand{\znc}{\mbox{${{\bf 0}}_n$}}
\newcommand{\dmc}{\mbox{${\bf d}_m$}}
\newcommand{\emc}{\mbox{${\bf e}_m$}}
\newcommand{\emr}{\mbox{${\bf e}_m^{\textnormal{\tiny{T}}}$}}
\newcommand{\enc}{\mbox{${\bf e}_n$}}
\newcommand{\enr}{\mbox{${\bf e}_n^{\mbox{\tiny{T}}}$}}
\newcommand{\umc}{\mbox{${\bf u}_m$}}
\newcommand{\vmc}{\mbox{${\bf v}_m$}}
\newcommand{\vnc}{\mbox{${\bf v}_n$}}
\newcommand{\vnr}{\mbox{${\bf v}_n^{\mbox{\tiny{T}}}$}}
\newcommand{\stimes}{\mbox{$s_{\times}$}}
\newcommand{\Sm}{\mbox{${\bf S}$}}
\newcommand{\uv}{\mbox{${\bf u}$}}
\newcommand{\vv}{\mbox{${\bf v}$}}
\newcommand{\pv}{\mbox{${\bf p}$}}
\newcommand{\qv}{\mbox{${\bf q}$}}
\newcommand{\xb}{\mbox{${\bf x}$}}
\newcommand{\yb}{\mbox{${\bf y}$}}
\newcommand{\bv}{\mbox{${\bf b}$}}
\newcommand{\Perm}{\mbox{${\bf P}$}}
\newcommand{\Qm}{\mbox{${\bf Q}$}}
\newcommand{\Rm}{\mbox{${\bf R}$}}
\newcommand{\Vm}{\mbox{${\bf V}$}}
\newcommand{\Vmi}{\mbox{${\bf V}^*$}}
\newcommand{\Xm}{\mbox{${\bf X}$}}
\newcommand{\Xmi}{\mbox{${\bf X}^{\mbox{\tiny -1}}$}}
\newcommand{\Xpinv}{\mbox{${\bf X}^{\!\mbox{\tiny -P}}$}}
\newcommand{\Xrinv}{\mbox{${\bf X}^{\!\mbox{\tiny -R}}$}}
\newcommand{\Ym}{\mbox{${\bf Y}$}}
\newcommand{\Ymi}{\mbox{${\bf Y}^{\mbox{\tiny -1}}$}}
\newcommand{\Dm}{\mbox{${\bf D}$}}
\newcommand{\Dmi}{\mbox{${\bf D}^{\mbox{\tiny -1}}$}}
\newcommand{\Dma}{\mbox{${\bf D}_1$}}
\newcommand{\Dmia}{\mbox{${\bf D}_1^{\mbox{\tiny -1}}$}}
\newcommand{\Dmb}{\mbox{${\bf D}_2$}}
\newcommand{\Dmib}{\mbox{${\bf D}_2^{\mbox{\tiny -1}}$}}
\newcommand{\Em}{\mbox{${\bf E}$}}
\newcommand{\Emi}{\mbox{${\bf E}^{\mbox{\tiny -1}}$}}
\newcommand{\Ema}{\mbox{${\bf E}_1$}}
\newcommand{\Emia}{\mbox{${\bf E}_1^{\mbox{\tiny -1}}$}}
\newcommand{\Emb}{\mbox{${\bf E}_2$}}
\newcommand{\Emib}{\mbox{${\bf E}_2^{\mbox{\tiny -1}}$}}
\newcommand{\Fm}{\mbox{${\bf F}$}}
\newcommand{\Fmt}{\mbox{${\bf F}^*$}}
\newcommand{\Gm}{\mbox{${\bf G}$}}
\newcommand{\Gmt}{\mbox{${\bf G}^*$}}
\newcommand{\FP}{\mbox{{\bf F}$_{\mbox{\tiny P}}$}}
\newcommand{\FR}{\mbox{{\bf F}$_{\mbox{\tiny R}}$}}
\newcommand{\dinv}[1]{{#1}^{\mbox{\tiny -D}}}
\newcommand{\linv}[1]{{#1}^{\mbox{\tiny -L}}}
\newcommand{\rinv}[1]{{#1}^{\mbox{\tiny -R}}}
\newcommand{\ginv}[1]{{#1}^{\mbox{\tiny -U}}}
\newcommand{\pinv}[1]{{#1}^{\mbox{\tiny -P}}}
\newcommand{\inv}[1]{{#1}^{\mbox{\tiny -1}}}
\newcommand{\DL}{\mbox{${\cal D}_{\mbox{\tiny L}}$}}
\newcommand{\DG}{\mbox{${\cal S}_{\mbox{\tiny U}}$}}
\newcommand{\DGL}{\mbox{${\cal D}_{\mbox{\tiny U}}^{\mbox{\tiny L}}$}}
\newcommand{\DGR}{\mbox{${\cal D}_{\mbox{\tiny U}}^{\mbox{\tiny R}}$}}
\newcommand{\TR}{\mbox{${\bf\cal T}$}}
\newcommand{\norm}[1]{\mbox{$\lVert #1 \rVert$}}
\newcommand{\Um}{\mbox{${\bf U}$}}
\newcommand{\Umi}{\mbox{${\bf U}^*$}}
\newcommand{\xmc}{\mbox{${\bf x}_m$}}
\newcommand{\ync}{\mbox{${\bf y}_n$}}
\newcommand{\ynr}{\mbox{${\bf y}_n^{\mbox{\tiny{T}}}$}}
\newcommand{\Atinv}{\mbox{${\bf A}^{\overset{\sim}{\!\mbox{\tiny -1}}}$}}
\newcommand{\Adinv}{\mbox{${\bf A}^{\!\mbox{\tiny -D}}$}}
\newcommand{\Apinv}{\mbox{${\bf A}^{\!\mbox{\tiny -P}}$}}
\newcommand{\Aginv}{\mbox{${\bf A}^{\!\mbox{\tiny -U}}$}}
\newcommand{\tinv}[1]{\mbox{${#1}^{\overset{\sim}{\mbox{\tiny -1}}}$}}
\newcommand{\yv}{\mbox{$\hat{\bf y}$}}
\newcommand{\av}{\mbox{$\hat{\bf \uptheta}$}}
\newcommand{\yyv}{\mbox{$\hat{\bf y}'$}}
\newcommand{\aav}{\mbox{$\hat{\bf \uptheta}'$}}
\newcommand{\Lnsv}{\mbox{L$_{\mbox{\scriptsize NSV}}$}}
\newcommand{\LXnsv}{\mbox{$\widetilde{\mbox{L}}_{\mbox{\scriptsize NSV}}$}}
\newcommand{\Jnsv}{\mbox{J$_{\mbox{\scriptsize NSV}}$}}
\newcommand{\JXnsv}{\mbox{$\widetilde{\mbox{J}}_{\mbox{\scriptsize NSV}}$}}
\newcommand{\Rnsv}{\mbox{R$_{\mbox{\scriptsize NSV}}$}}
\newcommand{\RXnsv}{\mbox{$\widetilde{\mbox{R}}_{\mbox{\scriptsize NSV}}$}}
\newcommand{\Lunsv}{\mbox{L$_{\mbox{\scriptsize UNSV}}$}}
\newcommand{\LXunsv}{\mbox{$\widetilde{\mbox{L}}_{\mbox{\scriptsize UNSV}}$}}
\newcommand{\Junsv}{\mbox{J$_{\mbox{\scriptsize UNSV}}$}}
\newcommand{\JXunsv}{\mbox{$\widetilde{\mbox{J}}_{\mbox{\scriptsize UNSV}}$}}
\newcommand{\Runsv}{\mbox{R$_{\mbox{\scriptsize UNSV}}$}}
\newcommand{\RXunsv}{\mbox{$\widetilde{\mbox{R}}_{\mbox{\scriptsize UNSV}}$}}
\begin{document}

\title{{Unit Consistency, Generalized Inverses, and\\
           Effective System Design Methods} }       
\author{
\IEEEauthorblockN{{\large\bf Jeffrey Uhlmann}}\\
\IEEEauthorblockA{\small University of Missouri-Columbia\\
201 EBW, Columbia, MO 65211\\
Email: uhlmannj@missouri.edu}}
\date{}          
\maketitle
\thispagestyle{empty}

\vspace{-11pt}

\begin{abstract}
A new generalized matrix inverse is derived which is consistent
with respect to arbitrary nonsingular diagonal transformations,
e.g., it preserves units associated with variables under state
space transformations. Applications of this unit-consistent (UC) 
generalized inverse are examined, including maintenance of
unit consistency as a design principle for promoting and assessing
the functional integrity of complex engineering systems. Results are 
generalized to obtain UC and unit-invariant matrix 
decompositions and illustrative
examples of their use are provided.\\
~\\
\begin{footnotesize}
\noindent {\bf Keywords}: {Drazin Inverse, Generalized Matrix Inverse, 
Image Databases, Inverse Problems, Linear Estimation, Linear Systems, 
Nonlinear Systems, Machine Learning, Matrix Analysis, 
Modular Systems, Moore-Penrose Pseudoinverse, Multiplicative Noise, Scale Invariance,
Singular Value Decomposition, SVD, System Design, System Identification,
Unit Consistency.}
\end{footnotesize}

\end{abstract}

\section{Introduction}

Many of the benefits of modular system design derive from an assumption 
that each module has been separately tested and verified  so that the 
composite system can be analyzed, tuned, and evaluated at a more 
manageable level of abstraction. However, seemingly benign decisions 
made as part of the design and implementation of a given module can have
significant unanticipated effects on the behavior of a system in which it is 
used. Consider a notional representation of a module that processes 
a parameter vector and a dataset to produce an output:
\begin{center}
     \includegraphics[scale=0.25]{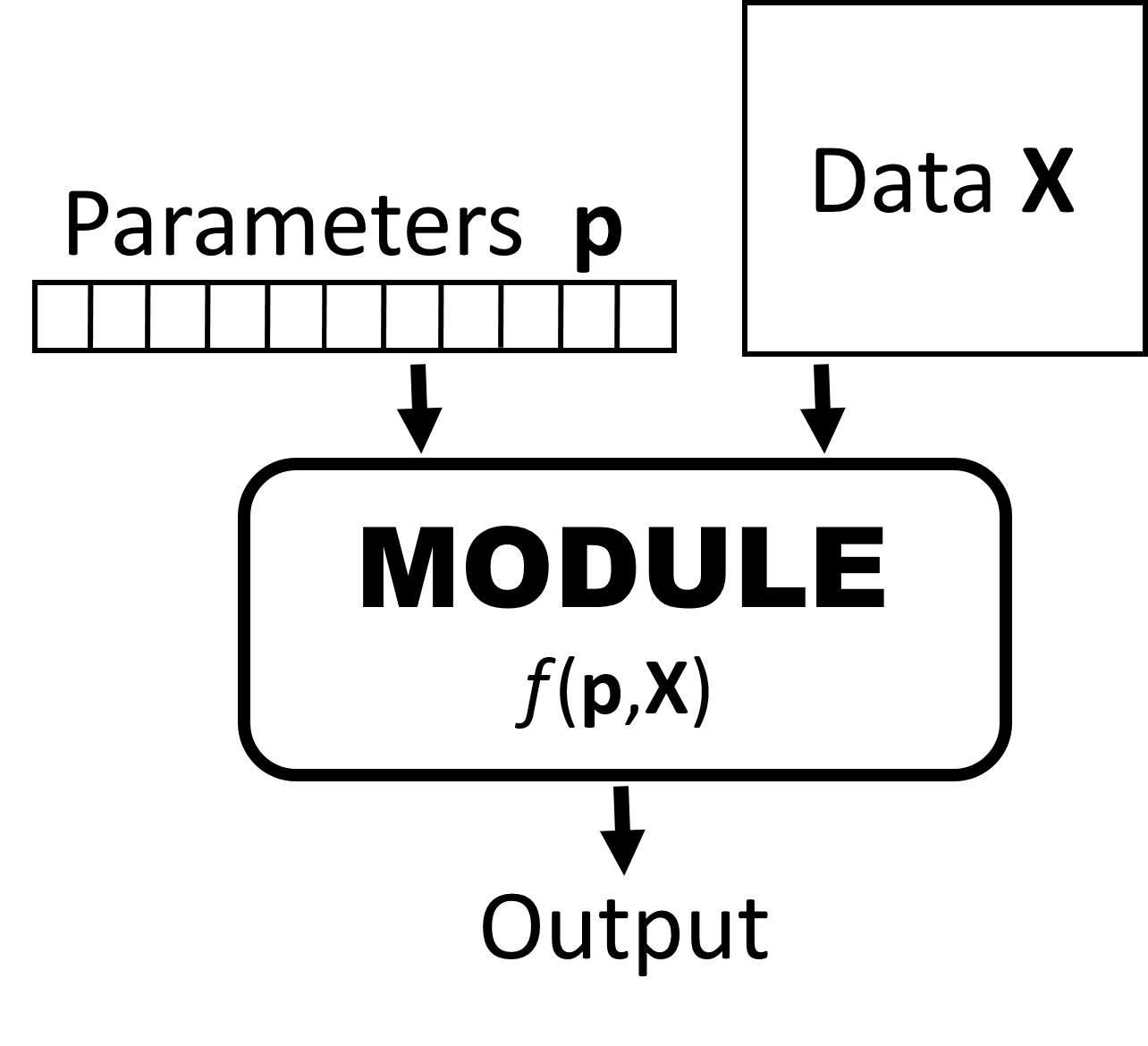}
\end{center}
The function performed by the module could be as simple as 
$f(\pv,\Xm)\doteq\Xm\pv\rightarrow\qv$, where $\Xm$ is interpreted 
to be a matrix and ${\bf p}$ is a column vector that is to be linearly
transformed by $\Xm$ to produce an output vector $\qv$.  
Such a ``linear transformation module'' can be easily implemented
and verified for correctness. For example, if the inputs $\pv$, $\Xm$, 
and $\qv$ are all defined in the same coordinate frame then the 
module can be tested to assess whether it is consistent with
respect to 
an orthogonal/unitary rotation of the coordinate frame,
i.e., that a unitary transformation applied to the input as
$\pv'=\Um\pv$ and $\Xm'=\Um\Xm\Umi$ gives
$f(\pv',\Xm')\rightarrow\Um\qv$. If consistency with
respect to a rotation of the coordinate frame is assumed
to hold at the system level then the integrity of the system 
as a whole can be tested with a change of coordinates
using a random rotation matrix $\Rm$.
More specifically, if an arbitrary rotation of the coordinate 
frame is applied to the inputs then the system should  
produce the same output but in the new rotated 
coordinates. If that does not occur then a fault has 
been detected. 

Note that this kind of consistency testing does not
just detect errors that are specifically related to the
choice of coordinate frame; rather, it detects {\em any}
error that leads to a violation of the consistency 
assumption. In other words, a consistency test 
is analogous to the use of parity bits and
checksums for detecting whether a bit string has been
corrupted: an error is unlikely to preserve
the assumed properties and thus will be
detected with high probability. 

Most complex real-world systems perform
transformations from inputs to outputs that are defined
in an application-specific state space for
which there is only an expectation of consistency with respect
to the choice of units associated with state variables
(e.g., kilometers-per-hour versus meters-per-second) rather
than a unitary mixing of those state variables. The 
appropriate consistency test should therefore assess
whether the system is {\em unit consistent} (UC). 
In the case of the linear transformation module, 
for example, the test for unit consistency might
verify that a diagonal change-of-unit transformation 
$\Dm$ applied as $\pv'=\Dm\pv$ and $\Xm'=\Dm\Xm\Dmi$ gives
$f(\pv',\Xm')\rightarrow\Dm\qv$. In other words, a change
of units for the input to the module should  
produce the same output but in the new units. 

Unit-consistency testing represents a potentially
valuable means for assessing a fundamental aspect of
system integrity that does not rely in any way on a 
qualitative interpretation of a battery of empirical tests 
or comparison against a limited set of ground-truth
benchmarks. Unfortunately, unit-consistency 
testing is not presently viable for the majority of 
complex systems because many modules that 
should be expected to exhibit unit consistency actually 
do not. This commonly occurs for modules that implement
functions which have a multiplicity of possible solutions,
e.g., due to insufficient constraints or due to the need
to produce an approximate solution when not all 
constraints can be satisfied. If, because of habit or 
convenience, the solution is made unique by the
aribitrary application of an ancillary criterion
(e.g., least-squares) which does not 
ensure unit consistency then any system that
uses that module will be prevented from applying
unit consistency as a test of system integrity.

A goal of this paper is to promote consistency 
analysis, and in particular unit consistency,
as a consideration during all levels of component
and system development whenever applicable.
Unit consistency has been suggested in the past
as a consideration in specific applications (e.g.,
robotics~\cite{duffy90,doty2} and
data fusion~\cite{jku95}), but
to advocate it as a general design consideration
requires development of UC
alternatives to some of the most widely-used
mathematical tools in engineering.
For example, determining a ``best'' solution 
to an underdetermined or overdetermined 
set of equations can often be formulated -- either implicitly 
or explicitly -- in terms of a generalized matrix inverse.
However, the most commonly applied inverse 
(Moore-Penrose~\cite{moore, penrose,big})
is {\em not} unit consistent\footnote{Concern 
about unit inconsistencies resulting from 
naive applications of the Moore-Penrose inverse
(i.e., least-squares solutions) was raised most
vocally by Doty~\cite{doty1,doty2} in the 
early 1990s in the context of hybrid-control
systems.  He referred to the problem as having
``only been discussed in back hallways at 
conferences or in private meetings'' prior to
the work of Duffy~\cite{doty1,duffy90}.} . In fact, 
the most commonly applied tools in linear systems analysis,
the eigen and singular-value decompositions, are inherently
not unit consistent and therefore require
UC alternatives. 

The structure of the paper is as follows:
Section~\ref{gmisec} motivates the importance
of preserving salient system properties as a means
for determining unique solutions to ill-posed inverse
problems and discusses the requirements for a 
unit-consistent generalized inverse. Section~\ref{lrucinv} 
develops left and right unit-consistent generalized 
inverses. Section~\ref{nzginv} develops a unit-consistent 
generalized inverse for elemental nonzero matrices,
and Section~\ref{gensec} develops a fully general
unit-consistent generalized matrix inverse.
Section~\ref{application} provides an example
of the use of consistency analysis to the problem of
obtaining a linearized approximation to an unknown
nonlinear transformation based on a set of input and
output vectors.
Section~\ref{ucsvd} applies the techniques used
to achieve unit consistency for the generalized inverse 
problem to develop unit-consistent and unit-invariant
alternatives to the singular value decomposition (SVD) 
and other tools from linear algebra. 
Section~\ref{application2} provides an example
of the use of unit-invariant singular values to 
provide retrieval robustness to multiplicative noise in
an image database application. Finally, Section~\ref{discsec} 
summarizes and discusses the contributions of the paper.

\section{Generalized Matrix Inverses}
\label{gmisec}

For a nonsingular $n\times n$  matrix\footnote{It is assumed throughout that 
matrices are defined over an associative normed division algebra (real, complex, 
and quaternion). The inverse of a unitary matrix $\Um$ can therefore be
expressed using the conjugate-transpose operator as $\Umi$.}  
$\Amn$ there exists a unique matrix inverse,
$\Amni$, for which certain properties of scalar inverses 
are preserved, e.g., commutativity:
\begin{equation}
      \Amn\Amni ~ = ~ \Amni\Amn ~ = ~ \Imat
\end{equation}
while others have direct analogs, e.g., matrix inversion distributes
over nonsingular multiplicands as:
\begin{equation}
      \inv{(\Xm\Amn\Ym)} ~ = ~ \Ymi\Amni\Xmi 
\end{equation}
When attempting to generalize the notion of a matrix
inverse for singular $\Amn$ it is only possible to 
define an approximate inverse, $\Atinv$, which retains
a subset of the algebraic properties of a true
matrix inverse. For example, 
a generalized inverse definition might simply require
the product $\Amn\Atinv$ to be idempotent in analogy to the identity matrix. 
Alternative definitions might further require:
\begin{equation}
      \Amn\Atinv\Amn ~ = ~ \Amn 
\end{equation}
or
\begin{equation}
      \Atinv\Amn ~ = ~ \Amn\Atinv
\end{equation}
and/or other properties that may be of analytic
or application-specific utility. 

The vast literature\footnote{Much of this literature is covered
in the comprehensive book by Ben-Israel and Greville~\cite{big}.}
on generalized inverse theory spans more than a century and
can inform the decision about which of the many 
possible generalized inverses is best suited to the needs of a
particular application.
For example, the {\em Drazin} inverse, $\Adinv$, satisfies the following
for any square matrix $\Amn$ and nonsingular matrix 
$\Xm$~\cite{drazin,cmr76,big}:
\begin{eqnarray}
    & ~ &       \Adinv\Amn\Adinv ~ = ~ \Adinv \\
    & ~ &       \Amn\Adinv ~ = ~ \Adinv\Amn \label{dinvcom} \\
    & ~ &       \dinv{(\Xm\Amn\Xmi)} ~ = ~ \Xm\Adinv\Xmi \label{dinvcnst}
\end{eqnarray}
Thus it is applicable when there is need for
commutativity (Eq.(\ref{dinvcom})) and/or consistency with respect
to similarity transformations (Eq.(\ref{dinvcnst})). On the
other hand, the Drazin inverse is only defined for square matrices
and does not guarantee the rank of 
$\Adinv$ to be the same as $\Amn$. Because
the rank of $\Adinv$ may be less than that of $\Amn$ 
(and in fact is zero for all nilpotent matrices), it is
not appropriate for recursive control and estimation
problems (and many other 
applications) that cannot accommodate
progressive rank reduction. 

The Moore-Penrose {\em pseudoinverse}, $\Apinv$, is defined 
for any $m\times n$ matrix $\Amn$ and satisfies conditions
which include the following for any conformant unitary matrices 
$\Um$ and $\Vm$:
\begin{eqnarray}
    & ~ &      \mbox{rank}[\Apinv] ~ = ~ \mbox{rank}[\Amn] \\
    & ~ &      \Amn\Apinv\Amn ~ = ~ \Amn \\
    & ~ &       \Apinv\Amn\Apinv ~ = ~ \Apinv \\
    & ~ &       \pinv{(\Um\Amn\Vm)} ~ = ~ \Vmi\Apinv\Umi \label{udist}
\end{eqnarray}
Its use is therefore appropriate when there is need for
unitary consistency, i.e., as guaranteed by Eq.(\ref{udist}). 
Despite its near-universal use throughout many areas of 
science and engineering ranging from tomography~\cite{berryman00} 
to genomics analysis~\cite{altergolub04}, the Moore-Penrose
inverse is not appropriate for many problems to which it is
commonly applied, e.g., state-space applications 
that require consistency with respect to the choice of units
for state variables. In the case of a square singular transformation
matrix $\Amn$, for example, a simple change of units applied 
to a set of state variables may require an
inverse $\Atinv$ to be perserved under diagonal similarity
\begin{equation}
      \tinv{(\Dm\Amn\Dmi)} ~ = ~ \Dm\Atinv\Dmi
\end{equation}  
where the diagonal matrix $\Dm$ defines an arbitary 
change of units. The Moore-Penrose inverse does not
satisfy this requirement because $\pinv{(\Dm\Amn\Dmi)}$
does not generally equal $\Dm\Apinv\Dmi$. As a concrete
example, given
\begin{equation}
\label{beginexample}
\Dm ~=~ \left[\begin{array}{cc}
                         1 & 0\\ 0 & 2 \end{array}\right] ~ ~ ~ ~
\Amn ~=~ \left[\begin{array}{cc}
                         1/2 & -1/2\\ 1/2 & -1/2 \end{array}\right] ~ ~ ~ ~
\end{equation}
it can be verified that 
\begin{equation}
\Apinv ~=~ \left[\begin{array}{cc}
                         ~~1/2 & ~~1/2\\ -1/2 & -1/2 \end{array}\right] 
\end{equation}
and that 
\begin{equation}
\Dm\Apinv\Dmi ~=~ \left[\begin{array}{cc}
                                         ~1/2 & ~1/4\\ -1 & -1/2 \end{array}\right] 
\end{equation}
which does not equal
\begin{equation}
\label{endexample}
\pinv{(\Dm\Amn\Dmi)} ~=~ \left[\begin{array}{cc}
                                                     ~~0.32 & ~~0.64\\ -0.16 & -0.32 \end{array}\right]. 
\end{equation}

To appreciate the significance of
unit consistency, consider the standard
linear model
\begin{equation}
     \yv ~ = ~ \Amn\cdot\av \label{linmod}
\end{equation}
where the objective is to identify a vector $\av$ of parameter values
satisfying the above equation for a data matrix $\Amn$ and
a known/desired state vector $\yv$. If $\Amn$ is nonsingular
then there exists a unique $\Amni$ which gives the solution
\begin{equation}
     \av ~ = ~ \Amni\cdot\yv
\end{equation}
If, however, $\Amn$ is singular then the
Moore-Penrose inverse could be applied as
\begin{equation}
     \av ~ = ~ \Apinv\cdot\yv
\end{equation}
to obtain a solution. Now 
suppose that $\yv$ and $\av$ are expressed in 
different units as
\begin{eqnarray}
     \yyv & = & \Dm\yv \\
     \aav & = & \Em\av
\end{eqnarray}
where the diagonal matrices $\Dm$ and $\Em$ represent changes
of units, 
e.g.,  from imperial to metric, or rate-of-increase in liters-per-hour 
to a rate-of-decrease in liters-per-minute, or
any other multiplicative change of units. Then Eq.(\ref{linmod}) can 
be rewritten in the new units as
\begin{equation}
     \yyv ~ = ~ \Dm\yv  ~ = ~ (\Dm\Amn\Emi)\cdot\Em\av
     ~ = ~ (\Dm\Amn\Emi)\cdot\aav
\end{equation}
but for which  
\begin{equation}
      \Em\av   ~ \neq ~ \pinv{(\Dm\Amn\Emi)}\cdot\yyv 
\end{equation}
In other words, the change of units applied to the input
does not generally produce the same output in the new units.
This is because the Moore-Penrose inverse only 
guarantees consistency with 
respect to unitary transformations and not with
respect to nonsingular diagonal transformations.
To ensure unit consistency in this example a generalized matrix 
inverse $\Atinv$ would have to satisfy 
\begin{eqnarray}
      \tinv{(\Dm\Amn\Emi)}  & = & \Em\Atinv\Dmi
\end{eqnarray}
Stated more generally, if $\Amn$ represents a mapping
$V\rightarrow W$ from a vector space $V$ to a vector space
$W$ then the inverse transformation $\Atinv$ must preserve
consistency with respect to the application of arbitrary 
changes of units to the coordinates (state variables) associated
with $V$ and $W$.

Given a singular $\Amn$ it is not possible to say that a 
solution obtained using one definition of a generalized inverse 
is generally ``better'' or ``worse'' than that from any
other, but there are certainly application-specific 
considerations that can govern the choice. As has been
discussed, maintenance of unit consistency in all 
components of a system permits the higher-level
integrity of that system to be sanity-checked 
for unit consistency. An equally valuable practical
benefit is that a fully unit-consistent system that has
been extensively tuned and validated is guaranteed
to produce entirely predictable results if a change of
units is applied. By contrast, if multiple modules violate unit 
consistency in various ways, e.g., through use of the
Moore-Penrose inverse, then the effect of a subsequent
change of units on system performance may be
highly unpredictable. This motivates the derivation
of unit-consistent generalized matrix inverses and,
more importantly, a general methodology for 
achieving unit consistency.

\section{Left and Right UC Generalized Inverses}
\label{lrucinv}

Inverse consistency with respect to a nonsingular left diagonal
transformation, $\linv{(\Dm\Amn)}=\linv{\Amn}\Dmi$, or a right nonsingular diagonal
transformation, $\rinv{(\Amn\Dm)}=\Dmi\rinv{\Amn}$, is straightforward to obtain.
The solution has likely been exploited implicitly in one form or another in many
applications over the years; however, its formal derivation and
analysis is a useful exercise to establish concepts and notation
that will be used later to derive the fully-general UC solution.

\begin{definition}
\label{defdl}
Given an $m\times n$ matrix $\Amn$, a {\em left diagonal scale function}, 
$\DL[\Amn]\in\mathbb{R}_+^{m\times m}$, is defined as giving
a positive diagonal matrix satisfying the following 
for all conformant positive diagonal matrices $\Dmp$, unitary diagonals $\Dmu$, 
permutations $\Perm$, and unitaries $\Um$:
\begin{eqnarray}
   & ~ & \DL[\Dmp\Amn]\cdot(\Dmp\Amn)   ~ = ~ \DL[\Amn]\cdot\Amn , \label{lscalinv}\\
   & ~ & \DL[\Dmu\Amn] ~ = ~ \DL[\Amn], \label{lduinv}\\
   & ~ & \DL[\Perm\Amn] ~ = ~ \Perm\cdot \DL[\Amn], \label{lperminv}\\
   & ~ & \DL[\Amn\Um] ~ = ~ \DL[\Amn] \label{lruinv} 
\end{eqnarray}
In other words, the product $\DL[\Amn]\cdot\Amn$ is invariant with respect to any positive left-diagonal
scaling of $\Amn$, and $\DL[\Amn]$ is consistent with respect to any left-permutation of $\Amn$ and is  invariant 
with respect to left-multiplication by any diagonal unitary and/or 
right-multiplication by any general unitary.
\end{definition}

\begin{lemma}
\label{dconst}
Existence of a left-diagonal scale function according to Definition~\ref{defdl} is established
by instantiating $\DL[\Amn]=\Dm$ with
\begin{equation}
   \Dm(i,i) ~ \doteq ~  
   \begin{cases}
              1/\norm{\Amn(i,:)}  & \norm{\Amn(i,:)}>0\\
             1 & \textnormal{otherwise}
   \end{cases}
\end{equation}
where $\Amn(i,:)$ is row $i$ of $\Amn$ and $\norm{\cdot}$  is a fixed unitary-invariant vector 
norm\footnote{The unitary-invariant norm used here is necessary only because of 
the imposed right-invariant condition of Eq.(\ref{lruinv}).}.
\end{lemma}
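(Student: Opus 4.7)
The plan is to verify each of the four axioms (\ref{lscalinv})--(\ref{lruinv}) directly for the explicit candidate $\Dm$ given in the lemma. The common observation underlying every case is that the $i$-th diagonal entry of $\Dm$ depends only on $\|\Amn(i,:)\|$, so each axiom reduces to a statement about how the norm of a single row of $\Amn$ transforms under the operation being applied. Zero rows can be treated as a uniform side case: each of the four allowed operations (positive left diagonal, unitary left diagonal, row permutation, right unitary) maps a zero row to a zero row, so the ``otherwise'' clause of the definition of $\Dm$ fires on both sides of each identity and the corresponding rows of the products are zero in both cases.

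For axiom (\ref{lscalinv}) I would invoke positive homogeneity of the norm: left-multiplication by the positive diagonal $\Dmp$ rescales row $i$ by $\Dmp(i,i)>0$, so on nonzero rows $\DL[\Dmp\Amn] = \DL[\Amn]\Dmpi$; multiplying by $\Dmp\Amn$ on the right and using that diagonal matrices commute gives $\DL[\Amn]\,\Amn$, as required. Axiom (\ref{lduinv}) is the same computation but with the row-scaling factor having unit modulus, so row norms are unaffected and $\Dm$ is unchanged. Axiom (\ref{lruinv}) is immediate from the unitary invariance of the chosen vector norm, which is precisely the hypothesis highlighted in the footnote: $\|\Amn(i,:)\,\Um\| = \|\Amn(i,:)\|$ for every row index $i$.

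The one step requiring more care is the permutation axiom (\ref{lperminv}). If $\Perm$ realizes a permutation $\pi$ of the row indices, then row $i$ of $\Perm\Amn$ equals $\Amn(\pi^{-1}(i),:)$, so the $i$-th diagonal entry of $\DL[\Perm\Amn]$ is the $\pi^{-1}(i)$-th diagonal entry of $\DL[\Amn]$; this is the relabeling realized by the conjugation $\Perm\,\DL[\Amn]\,\Permt$, and this is what the compact form of the axiom encodes, since both sides, when further multiplied by $\Perm\Amn$, produce $\Perm\cdot\DL[\Amn]\cdot\Amn$. I expect this reconciliation of the entrywise computation with the matrix-level statement of the permutation axiom to be the main obstacle; the remaining three axioms are direct consequences of absolute homogeneity and unitary invariance of the norm applied row by row.
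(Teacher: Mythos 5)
Your proposal is correct and follows essentially the same route as the paper's proof: direct row-by-row verification of each axiom using absolute homogeneity and unitary invariance of the norm, with zero rows handled by the ``otherwise'' clause. Your extra care on the permutation axiom---reading $\DL[\Perm\Amn]$ as the conjugation $\Perm\,\DL[\Amn]\,\Permt$ so that the product identity $\DL[\Perm\Amn]\cdot\Perm\Amn=\Perm\cdot\DL[\Amn]\cdot\Amn$ comes out right---is a sound and slightly more explicit rendering of the paper's one-line remark that $\Dm(i,i)$ is indexed with respect to the rows of $\Amn$.
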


\begin{proof}
$\DL[\cdot]$ as defined by Lemma~\ref{dconst} is a
strictly positive diagonal as required, and the left scale-invariance
condition of Eq.(\ref{lscalinv}) holds trivially for any row of $\Amn$ with
all elements equal to zero and holds for every nonzero row $i$ by homogeneity 
for any choice of vector norm as 
\begin{eqnarray}
    \Dmp(i,i)\Amn(i,:) ~ / ~ \norm{\Dmp(i,i)\Amn(i,:)} & = & \Dmp(i,i)\Amn(i,:) ~ / ~ 
                                                             \left(\Dmp(i,i)\cdot\norm{\Amn(i,:)}\right)  \\
   ~ & = & \Amn(i,:) ~ / ~ \norm{\Amn(i,:)}. 
\end{eqnarray}

The left diagonal-unitary-invariance condition of Eq.(\ref{lduinv}) 
is satisfied as $|\Dmu(i,i)|=1$ implies
$|(\Dmu\Amn)(i,j)|=|\Amn(i,j)|$ for every element 
$j$ of row $i$ of $\Dmu\Amn$.
The left permutation-invariance of Eq.(\ref{lperminv}) holds as
element $\Dm(i,i)$ is indexed with respect to the rows of $\Amn$,
and the right unitary-invariance
condition of Eq.(\ref{lruinv}) is satisfied by 
the assumed unitary invariance of the vector norm applied
to the rows of $\Amn$. 
\end{proof}

If $\Amn$ has {\em full support}, i.e., no row or column with all
elements equal to zero, then $\DL[\Dmp\Amn]=\Dmpi\cdot\DL[\Amn]$.
If, however, there exists a row $i$ of $\Amn$ with all
elements equal to zero then the $i$th diagonal element of 
$\DL[\Dmp\Amn]$ is $1$ according to Lemma~\ref{dconst},
so the corresponding element of $\Dmpi\cdot\DL[\Amn]$ will
be different unless $\Dmpi(i,i)=1$. Eq.(\ref{lscalinv}) holds 
because such elements are only applied to scale
rows of $\Amn$ with all elements equal to zero. The following
similarly holds in general
\begin{equation}
     \DL[\Dmp\Amn]\cdot\Amn ~ = ~ \Dmpi\cdot\DL[\Amn]\cdot\Amn, \label{dlident}
\end{equation}
and because any row $i$ of zeros in $\Amn$ implies that column $i$ of $\pinv{\Amn}$ will be 
zeros, the following also holds in general
\begin{equation}
     \pinv{\Amn}\cdot\DL[\Dmp\Amn] ~ = ~ \pinv{\Amn}\cdot\Dmpi\cdot\DL[\Amn].
\end{equation}

At this point it is possible to
derive a left generalized inverse of an arbitrary $m \times n$ matrix
$\Amn$, denoted $\linv{\Amn}$, that is consistent with 
respect to multiplication on the left by an arbitrary nonsingular diagonal matrix.

\begin{theorem}
\label{linvt}
For $m\times n$ matrix $\Amn$, the operator 
\begin{equation}
   \linv{\Amn} ~ \doteq ~ \pinv{\left(\DL[\Amn]\cdot\Amn\right)}\cdot\DL[\Amn]
\end{equation}
satisfies for any nonsingular diagonal matrix $\Dm$:
\begin{eqnarray}
   \Amn\linv{\Amn}\Amn & = & \Amn, \\
   \linv{\Amn}\Amn\linv{\Amn} & = & \linv{\Amn}, \\
   \linv{(\Dm\Amn)} & = & \linv{\Amn} \Dmi, \\
   \mbox{rank}[\linv{\Amn}] & = & \mbox{rank}[\Amn] 
\end{eqnarray}
and is therefore a left unit-consistent generalized inverse.
\end{theorem}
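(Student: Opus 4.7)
The plan is to introduce the shorthand $\Bm \doteq \DL[\Amn]\cdot\Amn$, so that $\linv{\Amn} = \pinv{\Bm}\cdot\DL[\Amn]$; since $\DL[\Amn]$ is a strictly positive diagonal matrix it is invertible and one has $\Amn = (\DL[\Amn])^{-1}\Bm$. With this notation, properties (i) and (ii) reduce to the Moore--Penrose identities $\Bm\pinv{\Bm}\Bm = \Bm$ and $\pinv{\Bm}\Bm\pinv{\Bm} = \pinv{\Bm}$, because the alternating factors of $\DL[\Amn]$ and its inverse telescope in the products $\Amn\linv{\Amn}\Amn$ and $\linv{\Amn}\Amn\linv{\Amn}$. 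Property (iv) is likewise immediate: $\linv{\Amn}$ is $\pinv{\Bm}$ right-multiplied by the nonsingular matrix $\DL[\Amn]$, and $\mbox{rank}[\pinv{\Bm}] = \mbox{rank}[\Bm] = \mbox{rank}[\Amn]$ by Moore--Penrose rank preservation together with the nonsingularity of $\DL[\Amn]$.

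The substantive work lies in property (iii). First I would write an arbitrary nonsingular diagonal as $\Dm = \Dmu\Dmp$, where $\Dmp$ carries the magnitudes and $\Dmu$ the unit-modulus phases, so that $\Dmi = \Dmpi\Dmui$. Eq.(\ref{lduinv}) then gives $\DL[\Dm\Amn] = \DL[\Dmp\Amn]$, and since every matrix in sight is diagonal (hence commutes) Eq.(\ref{lscalinv}) yields
\begin{equation*}
   \DL[\Dm\Amn]\cdot(\Dm\Amn) ~ = ~ \Dmu\cdot\DL[\Dmp\Amn]\cdot\Dmp\Amn ~ = ~ \Dmu\cdot\DL[\Amn]\cdot\Amn ~ = ~ \Dmu\Bm.
\end{equation*}
Applying the Moore--Penrose unitary distributivity of Eq.(\ref{udist}) to the diagonal unitary $\Dmu$ gives $\pinv{(\Dmu\Bm)} = \pinv{\Bm}\cdot\Dmui$, so
\begin{equation*}
   \linv{(\Dm\Amn)} ~ = ~ \pinv{\Bm}\cdot\Dmui\cdot\DL[\Dmp\Amn] ~ = ~ \pinv{\Bm}\cdot\DL[\Dmp\Amn]\cdot\Dmui.
\end{equation*}
Comparing this with the target $\linv{\Amn}\Dmi = \pinv{\Bm}\cdot\DL[\Amn]\cdot\Dmpi\Dmui$ and cancelling the common right factor $\Dmui$, the identity to be proved collapses to $\pinv{\Bm}\cdot\DL[\Dmp\Amn] = \pinv{\Bm}\cdot\DL[\Amn]\cdot\Dmpi$.

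This final reduction is where I expect the main obstacle to lie, since $\DL[\Dmp\Amn]$ is not in general equal to $\Dmpi\cdot\DL[\Amn]$: the two diagonals can disagree at indices $i$ for which row $i$ of $\Amn$ is all-zero, as Lemma~\ref{dconst} sets those diagonal entries of $\DL[\cdot]$ to $1$ rather than rescaling them by $\Dmpi$. The resolution is the same observation already used to justify Eq.(\ref{dlident}): a zero row $i$ of $\Amn$ produces a zero row $i$ of $\Bm = \DL[\Amn]\Amn$, and the Moore--Penrose inverse sends zero rows of its argument to zero columns of its output, so column $i$ of $\pinv{\Bm}$ vanishes at precisely those indices where the two right-hand diagonals can disagree. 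Right-multiplication by either diagonal therefore yields the same product, which completes the proof of (iii) and establishes full left unit-consistency of $\linv{\Amn}$.
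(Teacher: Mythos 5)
Your proof is correct and follows essentially the same route as the paper's: decompose $\Dm$ into a positive diagonal times a diagonal unitary, use the scale-invariance of $\DL[\Amn]\cdot\Amn$ (Eq.(\ref{lscalinv})) for the positive factor, the unitary consistency of the Moore--Penrose inverse for the unitary factor, and the vanishing columns of $\pinv{(\DL[\Amn]\Amn)}$ at zero rows of $\Amn$ to reconcile $\DL[\Dmp\Amn]$ with $\Dmpi\cdot\DL[\Amn]$. The only difference is presentational: you isolate that last reconciliation as an explicit final identity, whereas the paper establishes it in the discussion surrounding Eq.(\ref{dlident}) before the theorem and then substitutes silently inside its chain of equalities.
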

\begin{proof}
The first two generalized inverse properties can be established from the corresponding properties
of the Moore-Penrose inverse as: 
\begin{eqnarray}
\Amn\linv{\Amn}\Amn & = & \Amn \cdot \left\{\pinv{\left(\DL[\Amn]\cdot\Amn\right)}\cdot\DL[\Amn]\right\} \cdot \Amn \\
~ & = & \Amn \cdot \left\{\pinv{\left(\DL[\Amn]\cdot\Amn\right)}\cdot \left(\DL[\Amn]\cdot \Amn\right)\right\} \\
~ & = & (\inv{\DL[\Amn]}\cdot\DL[\Amn])\cdot \Amn \cdot \left\{\pinv{\left(\DL[\Amn]\cdot\Amn\right)}\cdot 
              \left(\DL[\Amn]\cdot \Amn\right)\right\} \\
~ & = & \inv{\DL[\Amn]}\cdot \left\{\underline{\left(\DL[\Amn]\cdot\Amn\right) \cdot \pinv{\left(\DL[\Amn]\cdot\Amn\right)}\cdot 
              \left(\DL[\Amn]\cdot \Amn\right)}\right\} \\
~ & = & \inv{\DL[\Amn]}\cdot \left(\DL[\Amn]\cdot \Amn\right)\\
~ & = & \Amn
\end{eqnarray}
and
\begin{eqnarray}
\linv{\Amn}\Amn\linv{\Amn} & = & \left\{\pinv{\left(\DL[\Amn]\cdot\Amn\right)}\cdot\DL[\Amn]\right\}\cdot\Amn\cdot
                                                        \left\{\pinv{\left(\inv{\DL[\Amn]}\Amn\right)}\cdot\DL[\Amn]\right\}  \\
~ & = & \left\{\underline{\pinv{\left(\DL[\Amn]\cdot\Amn\right)}\cdot\left(\DL[\Amn]\cdot\Amn\right)\cdot
                                                        \pinv{\left(\DL[\Amn]\cdot\Amn\right)}}\right\}\cdot \DL[\Amn]  \\
~ & = &  \pinv{\left(\DL[\Amn]\cdot\Amn\right)} \cdot \DL[\Amn] \\
~ & = & \linv{\Amn}
\end{eqnarray}
The left unit-consistency condition $\linv{(\Dm\Amn)}=\linv{\Amn}\Dmi$, for any nonsingular diagonal
matrix $\Dm$, can be established using a polar 
decomposition $\Dm=\Dmp\Dmu$:
\begin{eqnarray}
   \Dmp & = & \Abs[\Dm] \\
   \Dmu & = &  \Dm\Dmpi
\end{eqnarray}
and exploiting unitary-consistency of the Moore-Penrose 
inverse, i.e., $\pinv{(\Um\Amn)}=\pinv{\Amn}\Umi$, and 
commutativity of $\DL[\cdot]$ with other diagonal matrices:

\begin{eqnarray}
\linv{(\Dm\Amn)} & = &  \pinv{\left(\DL[\Dm\Amn]\cdot\Dm\Amn\right)}\cdot\DL[\Dm\Amn] \\
~ & = & \pinv{\left(\DL[\Dmp\Dmu\Amn]\cdot\Dmp\Dmu\Amn\right)}\cdot\DL[\Dmp\Dmu\Amn] \\
~ & = & \pinv{\left(\DL[\Amn]\cdot\Dmpi\cdot\Dmp\Dmu\Amn\right)}\cdot\Dmpi\cdot\DL[\Amn]  \\
~ & = & \pinv{\left(\DL[\Amn]\cdot(\Dmpi\Dmp)\cdot\Dmu\Amn\right)}\cdot\Dmpi\cdot\DL[\Amn]  \\
~ & = & \pinv{\left(\DL[\Amn]\cdot\Dmu\Amn\right)}\cdot\Dmpi\cdot\DL[\Amn] \label{lmpua}\\
~ & = & \pinv{\left(\DL[\Amn]\cdot\Amn\right)}\cdot\Dmui\cdot\Dmpi\cdot\DL[\Amn] \label{lmpub}\\
~ & = & \pinv{\left(\DL[\Amn]\cdot\Amn\right)}\cdot(\Dmui\Dmpi)\cdot\DL[\Amn] \\
~ & = & \pinv{\left(\DL[\Amn]\cdot\Amn\right)}\cdot\Dmi\cdot\DL[\Amn] \\
~ & = & \left\{\pinv{\left(\DL[\Amn]\cdot\Amn\right)}\cdot\DL[\Amn]\right\}\cdot\Dmi\\
~ & = & \linv{\Amn} \Dmi .
\end{eqnarray}
Lastly, the rank-consistency condition, $\mbox{rank}[\linv{\Amn}] =  \mbox{rank}[\Amn]$,
is satisfied as every operation performed according to
Lemma~\ref{dconst} preserves the rank of the original matrix.
In particular, the rank consistency of $\linv{\Amn}$ derives from the fact
that $\mbox{rank}[\Apinv]  =  \mbox{rank}[\Amn]$.
\end{proof}

A right unit-consistent generalized inverse clearly can be 
derived analogously or  in terms
of the already-defined left operator as
\begin{equation}
   \rinv{\Amn} ~ \doteq ~ \left(\linv{(\Amn^{\mbox{\tiny T}})}\right)^{\mbox{\tiny T}}.
\end{equation}
In terms of the linear model of Eq.(\ref{linmod}) for determining values
for parameters $\av$,
\begin{eqnarray}
     \yv & = & \Amn\cdot\av \nonumber \\
       ~ & \Downarrow & ~ \nonumber \\
     \av & = & \Atinv\cdot\yv \nonumber
\end{eqnarray}
the inverse $\Atinv$ could be instantiated with 
either $\linv{\Amn}$ or $\rinv{\Amn}$ to
provide, respectively, consistency with respect to
the application of a change of units to $\yv$ or
a change of units to $\av$ -- {\em but not both}. 

\section{UC Generalized Inverse for Elemental-Nonzero Matrices}
\label{nzginv}

The derivations of separate left and right UC inverses 
from the previous section cannot
be applied to achieve general unit consistency, i.e., to obtain
a UC generalized inverse
$\Aginv$ which satisfies
\begin{equation}
   \ginv{(\Dm\Amn\Em)} ~ = ~ \Emi  \Aginv \Dmi
\end{equation}
for arbitrary nonsingular diagonals $\Dm$ and $\Em$. However, a 
{\em joint} characterization of the left and right diagonal transformations
can provide a basis for doing so.

\begin{lemma}
\label{daex}
The transformation of an $m\times m$ matrix $\Amn$ as $\Dm\Amn\Em$, with $m\times m$
diagonal $\Dm$ and $n\times n$ diagonal $\Em$, is equivalent to a Hadamard
(elementwise) matrix product $\Xm\circ\Amn$ for some rank-1 matrix $\Xm$.
\end{lemma}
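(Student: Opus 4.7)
The plan is to prove this by a direct elementwise calculation, constructing the rank-1 matrix $\Xm$ explicitly from the diagonals of $\Dm$ and $\Em$. (I note a small typographical issue in the statement: the matrix $\Amn$ is referred to as $m\times m$ but the dimensions of $\Dm$ and $\Em$ indicate the intended shape is $m\times n$; I will proceed under that reading.)

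First I would write $\Dm=\text{diag}(d_1,\ldots,d_m)$ and $\Em=\text{diag}(e_1,\ldots,e_n)$ and observe that for any indices $i,j$,
\begin{equation}
   (\Dm\Amn\Em)_{ij} ~=~ d_i\, \Amn_{ij}\, e_j.
\end{equation}
Next I would define the column vector $\dm=(d_1,\ldots,d_m)^{\mbox{\tiny T}}$ and the row vector $\em^{\mbox{\tiny T}}=(e_1,\ldots,e_n)$, and set $\Xm \doteq \dm\,\em^{\mbox{\tiny T}}$. By construction $\Xm_{ij}=d_i e_j$, so the identity
\begin{equation}
   (\Xm\circ\Amn)_{ij} ~=~ \Xm_{ij}\Amn_{ij} ~=~ d_i e_j \Amn_{ij} ~=~ (\Dm\Amn\Em)_{ij}
\end{equation}
holds at every entry, establishing the Hadamard equivalence.

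Finally I would verify rank-1 of $\Xm$: as a nonzero outer product of two vectors, $\dm\em^{\mbox{\tiny T}}$ has rank exactly one whenever both $\dm$ and $\em$ are nonzero (which is guaranteed when $\Dm$ and $\Em$ are nonsingular, the case relevant to the subsequent UC theory), and rank at most one in general, which suffices for the statement. The only real decision point, rather than obstacle, is this rank-$1$ versus rank-at-most-$1$ distinction; in the UC application the diagonals will be nonsingular so every $\Xm$ that arises is genuinely rank-1, and the converse direction (every rank-1 $\Xm$ with nonvanishing factors arises from some diagonal pair) follows by reading the construction backwards.
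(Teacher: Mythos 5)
Your proof is correct and takes essentially the same approach as the paper: both construct $\Xm$ as the outer product of the diagonal vectors of $\Dm$ and $\Em$, your entrywise verification being simply the unpacked form of the paper's Hadamard-product manipulation $(\dmc\onr)\circ\Amn\circ(\omc\emr)=(\dmc\enr)\circ\Amn$. Your added remarks on the $m\times m$ versus $m\times n$ typo and on the rank-one versus rank-at-most-one distinction are sensible but not needed for the result.
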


\begin{proof}
Letting $\dmc=\Diag[\Dm]$ and $\enc=\Diag[\Em]$, the matrix product $\Dm\Amn\Em$ can be expressed as
\begin{eqnarray}
   \Dm\Amn\Em & = & (\dmc\onr)\circ\Amn\circ(\omc\emr) \\
   ~                    & = & \left\{(\dmc\onr)\circ(\omc\emr)\right\}\circ\Amn \\ 
   ~                   & = & (\dmc\enr)\circ\Amn
\end{eqnarray}
where $\onr$ is a row vector of $n$ ones and
$\omc$ is a column vector of $m$ ones.
Letting $\Xm=\dmc\enr$ completes the proof.
\end{proof}

\begin{definition}
\label{defdg}
For an $m\times n$ matrix $\Amn$, left and right {\em general-diagonal scale functions} 
$\DGL[\Amn]\in\mathbb{R}_+^{m\times m}$ and
$\DGR[\Amn]\in\mathbb{R}_+^{n\times n}$
are defined as jointly satisfying the following 
for all conformant positive diagonal matrices $\Dmp$ and $\Emp$, unitary diagonals $\Dmu$ and $\Dmv$, 
and permutations $\Perm$ and $\Qm$:
\begin{eqnarray}
   & ~ & \DGL[\Dmp\Amn\Emp]\cdot(\Dmp\Amn\Emp)\cdot\DGR[\Dmp\Amn\Emp]
                      ~ = ~ \DGL[\Amn]\cdot\Amn\cdot\DGR[\Amn] \label{defdg1}\\
   & ~ & \DGL[\Perm\Amn\Qm]\cdot(\Perm\Amn\Qm)\cdot\DGR[\Perm\Amn\Qm]
                      ~ = ~ \Perm\cdot\left\{\DGL[\Amn]\cdot\Amn\cdot\DGR[\Amn]\right\}\cdot\Qm \label{defdg2}\\
   & ~ & \DGL[\Dmu\Amn\Dmv] ~ = ~ \DGL[\Amn], \label{defdg3}\\
   & ~ & \DGR[\Dmu\Amn\Dmv] ~ = ~ \DGR[\Amn] \label{defdg4}
\end{eqnarray}
The function $\DG[\Amn]$ is defined to be the rank-1 matrix guaranteed by
Lemma~\ref{daex} 
\begin{equation}
   \DG[\Amn] \circ \Amn ~ \equiv ~ \DGL[\Amn]\cdot\Amn\cdot\DGR[\Amn]
\end{equation}
i.e.,
\begin{equation}
   \DG[\Amn] ~ = ~ \Diag[\DGL[\Amn]]\cdot\Diag[\DGR[\Amn]]^{\mbox{\tiny T}}
\end{equation}
\end{definition}

\begin{definition}
A matrix $\Amn$ is defined to be an {\em elemental-nonzero matrix} if and only if
it does not have any element equal to zero. 
\end{definition}

The following lemma uses the elementwise matrix functions $\Log[\cdot]$ and $\Exp[\cdot]$,
where $\Log[\Amn]$ represents the result of taking the logarithm of the magnitude
of each element of $\Amn$ and $\Exp[\Amn]$ represents the taking of the
exponential of every element of $\Amn$.

\begin{lemma}
\label{nonzerodef}
Existence of a general diagonal scale function according to Definition~\ref{defdg} for
arguments without zero elements is established
by instantiating $\DGL[\Amn]$ and $\DGR[\Amn]$ as
\begin{eqnarray}
   \DGL[\Amn] & = & \Diag[\xmc] \\
   \DGR[\Amn] & = & \Diag[\ync] \\
   ~ & \textnormal{for} & ~ \nonumber \\
   \xmc\cdot\ynr & = &  \DG[\Amn] ~ = ~ \Exp\left[ \Jm\Lmn\Jn - \Lmn\Jn - \Jm\Lmn  \right] 
\end{eqnarray}
where $\Lmn=\Log[\Amn]$ and $\Jm$ has all elements equal to $1/m$ and $\Jn$ has all 
elements equal to $1/n$.
\end{lemma}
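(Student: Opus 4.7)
The plan is to verify the four conditions of Definition~\ref{defdg} by passing to elementwise logarithms, so that multiplicative rescalings become additive rank-one perturbations that the ``double-centering'' map $M\mapsto M-M\Jn-\Jm M+\Jm M\Jn$ annihilates. Because $\Amn$ has no zero entries, $\Lmn=\Log[\Amn]$ is finite; because $\DG[\Amn]=\Exp[\Jm\Lmn\Jn-\Lmn\Jn-\Jm\Lmn]$ is strictly positive, the governing identity is
\begin{equation}
    \Log\!\left[\,\DG[\Amn]\circ\Amn\,\right]~=~\Lmn-\Lmn\Jn-\Jm\Lmn+\Jm\Lmn\Jn,
\end{equation}
which is exactly the double-centered log-magnitude matrix.

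First I would verify that $\DG[\Amn]$ really is a rank-one matrix, as demanded by Lemma~\ref{daex}. The products $\Lmn\Jn$, $\Jm\Lmn$, and $\Jm\Lmn\Jn$ are, respectively, a constant-column matrix (the row means of $\Lmn$ repeated across each column), a constant-row matrix (the column means repeated down each row), and a scalar-constant matrix (the grand mean). Hence $\Log[\DG[\Amn]]$ has the additive form $\pv\,\onr+\omc\,\qv^{\mathrm{T}}$ for some $\pv\in\mathbb{R}^m$, $\qv\in\mathbb{R}^n$, and its elementwise exponential factors as an outer product $\xmc\,\ynr$ of strictly positive vectors. Taking $\DGL[\Amn]=\Diag[\xmc]$ and $\DGR[\Amn]=\Diag[\ync]$ then defines the required functions; the residual scalar ambiguity $\xmc\mapsto c\,\xmc$, $\ync\mapsto c^{-1}\,\ync$ cancels in every product $\DGL\Amn\DGR$, and the canonical choice fixed by the formula depends on $\Amn$ only through $\Lmn$.

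Next I would verify~(\ref{defdg1}). For positive diagonals $\Dmp,\Emp$, one has $\Log[\Dmp\Amn\Emp]=\dmc\,\onr+\Lmn+\omc\,\enr$, where $\dmc$ and $\enc$ collect the log-magnitudes of the diagonal entries. The identities $\Jm\omc=\omc$, $\onr\Jn=\onr$, $\Jm\Jm=\Jm$, and $\Jn\Jn=\Jn$ cause the rank-one perturbations $\dmc\,\onr$ and $\omc\,\enr$ to vanish under the double-centering map, leaving $\Log[\DG[\Dmp\Amn\Emp]\circ(\Dmp\Amn\Emp)]=\Log[\DG[\Amn]\circ\Amn]$. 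Since both Hadamard products inherit the sign pattern of $\Amn$, equality of magnitudes gives equality as matrices.

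For~(\ref{defdg2}), the decisive observation is that $\Jm$ has identical rows and identical columns, so $\Perm\Jm=\Jm\Perm=\Jm$ and likewise $\Qm\Jn=\Jn\Qm=\Jn$. Substituting $\Perm\Lmn\Qm$ for $\Lmn$ in the formula for $\Log[\DG[\cdot]]$ and applying these identities yields $\Log[\DG[\Perm\Amn\Qm]]=\Perm\Log[\DG[\Amn]]\Qm$; since elementwise exponentiation commutes with permutations, $\DG[\Perm\Amn\Qm]=\Perm\DG[\Amn]\Qm$, and Hadamard multiplication by $\Perm\Amn\Qm$ gives the required identity. Conditions~(\ref{defdg3}) and~(\ref{defdg4}) are immediate, since $\Dmu,\Dmv$ have unit-magnitude entries: $\Log[\Dmu\Amn\Dmv]=\Lmn$, so the entire construction depends on $\Amn$ only through $\Lmn$. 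The main obstacle will be the bookkeeping in~(\ref{defdg1}): verifying that several cross-terms cancel simultaneously under double-centering, which is precisely why the specific combination $\Jm\Lmn\Jn-\Lmn\Jn-\Jm\Lmn$ was chosen in the definition.
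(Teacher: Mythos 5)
Your proof is correct and follows essentially the same route as the paper's: both pass to elementwise log-magnitudes, exhibit $\Log[\DG[\Amn]]$ as a sum of a column-constant and a row-constant matrix to obtain the positive rank-one factorization, and establish Eq.~(\ref{defdg1}) by showing that the log-perturbations $\dmc\onr$ and $\omc\enr$ are annihilated --- the paper carries out the identical calculation by explicitly deriving $\DG[\Dmp\Amn\Emp]=\Dmpi\cdot\DG[\Amn]\cdot\Empi$ and cancelling in the Hadamard product, which is just your double-centering argument written term by term. The permutation and diagonal-unitary cases are handled the same way in both.
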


\begin{proof}
First it must be shown that $\Exp\left[ \Jm\Lmn\Jn - \Lmn\Jn - \Jm\Lmn  \right]$ is a rank-1 matrix. 
This can be achieved by expanding as
\begin{eqnarray}
       \Jm\Lmn\Jn - \Lmn\Jn - \Jm\Lmn  & = &  
                       (\frac{1}{2}\Jm\Lmn\Jn - \Lmn\Jn) +  (\frac{1}{2}\Jm\Lmn\Jn - \Jm\Lmn)\\
      ~ & = &   (\frac{1}{2}\Jm\Lmn - \Lmn) \Jn + \Jm(\frac{1}{2}\Lmn\Jn - \Lmn)\\
      ~ & = &  \umc\onr + \omc\vnr 
\end{eqnarray}
where 
\begin{eqnarray}   
     \umc & = & \frac{1}{n}\left(\frac{1}{2}\Jm - \Imm\right)\Lmn \cdot \onc \\
     \vnr & = &  \omr \cdot \Lmn\left(\frac{1}{2}\Jn - \Imn\right)/m
\end{eqnarray}
and then noting that the elementwise exponential of  $\umc\onr + \omc\vnr$ is the strictly positive
rank-1 matrix $\Exp[\umc]\cdot\Exp[\vnr]$, i.e., $\DGL[\Amn]=\Diag[\Exp[\umc]]$ and
$\DGR[\Amn]=\Diag[\Exp[\vmc]]$, which confirms existence and strict positivity as required.
Eq.(\ref{defdg1}) can be established by observing that
\begin{equation}
   \DGL[\Dmp\Amn\Emp]\cdot(\Dmp\Amn\Emp)\cdot\DGR[\Dmp\Amn\Emp] ~ \equiv ~ \DG[\Dmp\Amn\Emp]\circ(\Dmp\Amn\Emp)
\end{equation}
and letting 
$\dmc=\Diag[\Dmp]$,
$\enc=\Diag[\Emp]$, $\umc=\Log[\dmc]$,
$\vnc=\Log[\emc]$, and $\Lmn=\Log[\Amn]$:
\begin{eqnarray}   
    \DG[\Dmp\Amn\Emp] & = & \DG[ ~ (\dmc\onr) \circ \Amn \circ (\omc\enr) ~ ] \\
    ~ & = &  \DG[~ \Exp[\umc\onr]\circ \Amn \circ \Exp[\omc\vnr] ~ ] \\
    ~ & = & \Exp[~ \Jm(\umc\onr + \Lmn+ \omc\vnr)\Jn\\
    ~ & ~ &  ~ \phantom{\Exp[} - (\umc\onr + \Lmn+ \omc\vnr)\Jn  \\
    ~ & ~ &  ~ \phantom{\Exp[}  - \Jm(\umc\onr + \Lmn+ \omc\vnr) ~]\\
    ~ & = & \Exp[~ (\Jm\cdot\umc\onr + \Jm\Lmn\Jn + \omc\vnr\cdot\Jn) \\
    ~ & ~ &  ~ \phantom{\Exp[} - (\umc\onr + \Lmn\Jn + \omc\vnr\cdot\Jn) \\
    ~ & ~ &  ~ \phantom{\Exp[}  -  (\Jm\cdot\umc\onr + \Jm\Lmn + \omc\vnr) ~] \\
    ~ & = & \Exp[~ (-\umc\onr) + (\Jm\Lmn\Jn-\Lmn\Jn-\Jm\Lmn) + (-\omc\vnr) ~] \\
    ~ & = & \Exp[-\umc\onr] ~ \circ ~ \DG[\Amn]  ~ \circ ~ \Exp[- \omc\vnr] \\
    ~ & = & \Dmpi\cdot\DG[\Amn]\cdot\Empi  \label{disei}
\end{eqnarray}
where the last step recognizes that $-\umc=\Log[\Diag[\Dmpi]]$ and
$-\vnc=\Log[\Diag[\Empi]]$. The identity of Eq.(\ref{defdg1}) can then be shown as:
\begin{eqnarray}
      \DGL[\Dmp\Amn\Emp]\cdot(\Dmp\Amn\Emp)\cdot\DGR[\Dmp\Amn\Emp] & = & \DG[\Dmp\Amn\Emp]\circ(\Dmp\Amn\Emp) \\
      ~ & = & (\Dmpi\cdot\DG[\Amn]\cdot\Empi) \circ (\Dmp\Amn\Emp) \\
      ~ & = & \DG[\Amn]\circ\Amn \\
      ~ & = & \DGL[\Amn]\cdot\Amn\cdot\DGR[\Amn]
\end{eqnarray}
Eq.(\ref{defdg2})  holds as the indexing of the rows and columns of 
$\DGL[\Amn]$ and $\DGR[\Amn]$ (and $\DG[\Amn]$) is the same as that of $\Amn$.
Eqs.(\ref{defdg3}) and~(\ref{defdg4}) hold directly because Lemma~\ref{nonzerodef} 
only involves functions of the
absolute values of the elements of the argument matrix $\Amn$.
\end{proof}

\begin{theorem}
\label{ginvtp}
For an elemental-nonzero $m\times n$ matrix $\Amn$, the operator 
\begin{equation}
   \Aginv ~ \doteq ~ \DGR[\Amn]\cdot
         \pinv{\left(\DGL[\Amn]\cdot\Amn\cdot\DGR[\Amn]\right)}\cdot\DGL[\Amn]
         \label{ginvdef}
\end{equation}
satisfies for any nonsingular diagonal matrices $\Dm$ and $\Em$:
\begin{eqnarray}
   \Amn\Aginv\Amn & = & \Amn, \\
   \Aginv\Amn\Aginv & = & \Aginv, \\
   \ginv{(\Dm\Amn\Em)} & = & \Emi\Aginv\Dmi, \label{ginvuc}\\
   \mbox{rank}[\Aginv] & = & \mbox{rank}[\Amn] 
\end{eqnarray}
and is therefore a general unit-consistent generalized inverse.
\end{theorem}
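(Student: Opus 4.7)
The plan is to follow the template of the proof of Theorem~\ref{linvt}, but to condition $\Amn$ simultaneously on both sides using $\DGL[\Amn]$ and $\DGR[\Amn]$. Writing $\Bm\doteq\DGL[\Amn]\cdot\Amn\cdot\DGR[\Amn]$ and noting that both $\DGL[\Amn]$ and $\DGR[\Amn]$ are strictly positive (hence nonsingular) diagonals by Lemma~\ref{nonzerodef}, the first two Moore-Penrose-style identities are essentially bookkeeping: expand $\Amn\Aginv\Amn$ and $\Aginv\Amn\Aginv$, insert the identities $\inv{\DGL[\Amn]}\DGL[\Amn]$ and $\DGR[\Amn]\inv{\DGR[\Amn]}$ at the appropriate ends, then apply $\Bm\pinv{\Bm}\Bm=\Bm$ and $\pinv{\Bm}\Bm\pinv{\Bm}=\pinv{\Bm}$ to collapse the interior. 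The rank condition is immediate: $\DGL[\Amn]$ and $\DGR[\Amn]$ preserve rank, and the Moore-Penrose inverse is rank-preserving, so $\mbox{rank}[\Aginv]=\mbox{rank}[\pinv{\Bm}]=\mbox{rank}[\Bm]=\mbox{rank}[\Amn]$.

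The substantive step is unit consistency, Eq.(\ref{ginvuc}). I would handle it by polar-decomposing $\Dm=\Dmp\Dmu$ and $\Em=\Emp\Emu$ into a strictly positive diagonal and a unitary diagonal, and treating the two factors separately. For the unitary parts, the invariance conditions~(\ref{defdg3}) and~(\ref{defdg4}) give $\DGL[\Dmu\Amn\Emu]=\DGL[\Amn]$ and $\DGR[\Dmu\Amn\Emu]=\DGR[\Amn]$. I would then commute $\Dmu$ and $\Emu$ past the surrounding diagonal factors $\DGL[\Amn]$ and $\DGR[\Amn]$ (which is free since all are diagonal) and pull them through the pseudoinverse using its unitary-consistency property (Eq.(\ref{udist})), producing $\inv{\Dmu}$ on the right of $\pinv{\Bm}$ and $\inv{\Emu}$ on its left.

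For the positive-diagonal parts, the construction in Lemma~\ref{nonzerodef} gives $\DG[\Dmp\Amn\Emp]=\Dmpi\cdot\DG[\Amn]\cdot\Empi$, which by rank-one factorization forces $\DGL[\Dmp\Amn\Emp]=c\,\Dmpi\DGL[\Amn]$ and $\DGR[\Dmp\Amn\Emp]=c^{-1}\DGR[\Amn]\Empi$ for some positive scalar $c$. Substituting into the definition of $\ginv{(\Dmp\Amn\Emp)}$, the invariance Eq.(\ref{defdg1}) leaves the inner pseudoinverse unchanged, the scalar $c$ cancels against $c^{-1}$, and the remaining diagonal factors commute to yield $\Empi\Aginv\Dmpi$. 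Combining the unitary and positive-diagonal conclusions delivers $\ginv{(\Dm\Amn\Em)}=\Emi\Aginv\Dmi$.

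The hard part will be the scalar-cancellation bookkeeping above: because $\DG[\Amn]$ is rank-one, its row/column factorization is unique only up to a reciprocal positive scalar, so $\DGL[\Amn]$ and $\DGR[\Amn]$ are individually determined only up to that gauge. I would need to verify cleanly that the definition of $\Aginv$ is invariant under this gauge freedom (which it is, since any scalar $c$ shifted from $\DGR$ into $\DGL$ reappears as $c\cdot c^{-1}=1$ in the product $\DGR\cdot\pinv{\Bm}\cdot\DGL$) and that this same freedom reconciles the literal output of the construction in Lemma~\ref{nonzerodef} with the conformant factorization above. Once that invariance is dispatched, the remaining consistency calculation is routine diagonal algebra.
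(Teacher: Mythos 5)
Your proposal is correct and follows essentially the same route as the paper: insert $\inv{\DGL[\Amn]}\cdot\DGL[\Amn]$ and $\DGR[\Amn]\cdot\inv{\DGR[\Amn]}$ to collapse the first two identities via the corresponding Moore-Penrose properties, inherit the rank condition from the rank-preservation of the Moore-Penrose inverse, and establish unit consistency by polar-decomposing $\Dm=\Dmp\Dmu$ and $\Em=\Emp\Emu$ and treating the unitary-diagonal factors (via Eqs.(\ref{defdg3})--(\ref{defdg4}) and unitary consistency of the pseudoinverse) separately from the positive-diagonal factors (via the scaling relation of Eq.(\ref{disei})). Your extra attention to the reciprocal-scalar gauge in factoring the rank-one matrix $\DG[\Amn]$ is a detail the paper's proof silently absorbs when it substitutes $\DGL[\Dmp\Amn\Emp]=\DGL[\Amn]\cdot\Dmpi$ and $\DGR[\Dmp\Amn\Emp]=\Empi\cdot\DGR[\Amn]$ directly, and your observation that this scalar cancels between the inner pseudoinverse argument and the outer factors of $\Aginv$ is precisely what makes those substitutions harmless.
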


\begin{proof}
The first two generalized inverse properties can be established from the corresponding properties
of the MP-inverse as: 
\begin{eqnarray}
\Amn\Aginv\Amn & = & \Amn\cdot \left\{\DGR[\Amn]\cdot
          \pinv{\left(\DGL[\Amn]\cdot\Amn\cdot\DGR[\Amn]\right)} 
          \cdot\DGL[\Amn]\right\} \cdot \Amn \\
~ & = & (\inv{\DGL[\Amn]}\cdot\DGL[\Amn])\cdot \\
~ & ~ & \phantom{AAA} \Amn\cdot\left\{\DGR[\Amn]
                         \cdot\pinv{\left(\DGL[\Amn]\cdot\Amn\cdot\DGR[\Amn]\right)}
                         \cdot\DGL[\Amn]\right\}\cdot\Amn \\
 ~ & ~ &  \phantom{AAAAAA} \cdot(\DGR[\Amn]\cdot\inv{\DGR[\Amn]}) \\
~ & = & \inv{\DGL[\Amn]}\cdot \\
 ~ & ~ &  \phantom{A} \underline{(\DGL[\Amn]\cdot\Amn\cdot\DGR[\Amn])
          \cdot\pinv{\left(\DGL[\Amn]\cdot\Amn\cdot\DGR[\Amn]\right)}\cdot
          (\DGL[\Amn]\cdot\Amn\cdot\DGR[\Amn])} \\
 ~ & ~ &  \phantom{AA} \cdot\inv{\DGR[\Amn]} \\
~ & = & \inv{\DGL[\Amn]}\cdot(\DGL[\Amn]\cdot\Amn\cdot\DGR[\Amn])
          \cdot\inv{\DGR[\Amn]} \\ 
~ & = & \Amn
\end{eqnarray}
and
\begin{eqnarray}
\Aginv\Amn\Aginv & = & \left\{\DGR[\Amn]\cdot
          \pinv{\left(\DGL[\Amn]\cdot\Amn\cdot\DGR[\Amn]\right)} 
          \cdot\DGL[\Amn]\right\}\cdot\Amn \\
 ~ & ~ &  ~ ~ ~ ~ ~ ~\cdot \left\{\DGR[\Amn]
          \cdot\pinv{\left(\DGL[\Amn]\cdot\Amn\cdot\DGR[\Amn]\right)}
          \cdot\DGL[\Amn]\right\} \\
~ & = & \DGR[\Amn]\\
 ~ & ~ &  \phantom{A}  \cdot \pinv{\left(\DGL[\Amn]\cdot\Amn\cdot\DGR[\Amn]\right)}\cdot 
          \underline{\left(\DGL[\Amn]\cdot\Amn\cdot\DGR[\Amn]\right)
          \cdot\pinv{\left(\DGL[\Amn]\cdot\Amn\cdot\DGR[\Amn]\right)}} \\
  ~ & ~ &  \phantom{AA} \cdot\DGL[\Amn] \\
~ & = & \DGR[\Amn]\cdot
        \pinv{\left(\DGL[\Amn]\cdot\Amn\cdot\DGR[\Amn]\right)}
        \cdot\DGL[\Amn] \\
~ & = & \Aginv
\end{eqnarray}

The general UC condition $\ginv{(\Dm\Amn\Em)}=\Emi  \Aginv \Dmi$, for any nonsingular diagonal
matrix $\Dm$, can be established using a polar decompositions $\Dm=\Dmp\Dmu$ and $\Em=\Emp\Emu$:
\begin{eqnarray}
\ginv{(\Dm\Amn\Em)} & = &  \DGR[\Dm\Amn\Em]\cdot
         \pinv{\left(\DGL[\Dm\Amn\Em]\cdot(\Dm\Amn\Em)\cdot\DGR[\Dm\Amn\Em]\right)}\cdot\DGL[\Dm\Amn\Em] \\
~ & = & \DGR[\Dmp\Amn\Emp]\cdot
         \pinv{\left(\DGL[\Dmp\Amn\Emp]\cdot(\Dm\Amn\Em)\cdot\DGR[\Dmp\Amn\Emp]\right)}\cdot\DGL[\Dmp\Amn\Emp] \\
~ & = & \Empi\cdot\DGR[\Amn]\cdot 
              \pinv{\left(\DGL[\Amn]\cdot\Dmpi\cdot(\Dm\Amn\Em)\cdot\Empi\cdot\DGR[\Amn]\right)}
              \cdot\DGL[\Amn]\cdot\Dmpi \\
~ & = &  \Empi\cdot\DGR[\Amn]\cdot 
              \pinv{\left(\DGL[\Amn]\cdot\Dmu\cdot\Amn\cdot\Emu\cdot\DGR[\Amn]\right)}
              \cdot\DGL[\Amn]\cdot\Dmpi     \label{gpinvua}  \\
~ & = & \Empi\cdot\DGR[\Amn]\cdot\Emui\cdot
              \pinv{\left(\DGL[\Amn]\cdot\Amn\cdot\DGR[\Amn]\right)}
              \cdot\Dmui\cdot\DGL[\Amn]\cdot\Dmpi    \label{gpinvub}   \\
~ & = & (\Empi\cdot\Emui)\cdot\DGR[\Amn]\cdot
              \pinv{\left(\DGL[\Amn]\cdot\Amn\cdot\DGR[\Amn]\right)}
              \cdot\DGL[\Amn]\cdot(\Dmui\cdot\Dmpi) \\
~ & = & \Emi\cdot\left\{\DGR[\Amn]\cdot
              \pinv{\left(\DGL[\Amn]\cdot\Amn\cdot\DGR[\Amn]\right)}
              \cdot\DGL[\Amn]\right\}\cdot\Dmi\\
~ & = & \Emi\cdot\Aginv\cdot\Dmi .
\end{eqnarray}
The rank-consistency condition of the theorem holds exactly as 
for the proof of Theorem~\ref{linvt}.
\end{proof}

The elemental-nonzero condition of Lemma~\ref{nonzerodef} is required
to ensure the existence of the elemental logarithms for $\Lmn=\Log[\Amn]$, so
the closed-form solution for the general unit-consistent matrix
inverse of Theorem~\ref{ginvtp} is applicable only to matrices without
zero elements. In many contexts involving general matrices there
is no reason to expect any elements to be identically zero, but in
some applications, e.g., compressive sensing, zeros are structurally
enforced. Unfortunately, Lemma~\ref{nonzerodef} cannot be extended
to accommodate zeros by a simple limiting strategy; however, 
results from matrix scaling theory can be applied to derive an 
unrestricted solution.

\section{The Fully-General Unit-Consistent Generalized Inverse}
\label{gensec}

Given a nonnegative matrix $\Amn\in\mathbb{R}^{m\times n}$ with 
full support, $m$ positive numbers $S_1...S_m$, and $n$ positive 
numbers $T_1...T_n$, Rothblum \& Zenios~\cite{rz92} investigated the problem
of identifying positive diagonal matrices $\Um\in\mathbb{R}^{m\times m}$ and 
$\Vm\in\mathbb{R}^{n\times n}$ such that the product of the 
nonzero elements of each row $i$ of $\Amn'=\Um\Amn\Vm$ is $S_i$ and
the product of the nonzero elements of each column $j$ of 
$\Amn'$ is $T_j$. They provided an efficient solution, referred
to in their paper as {\em Program II}, and analyzed 
its properties. Specifically, for vectors 
$\mu\in\mathbb{R}^m$ and $\eta\in\mathbb{R}^n$, defined
in their paper\footnote{As will be seen, the precise definitions of
$\mu$ and $\eta$ will prove irrelevant for purposes of this paper.}, 
they proved the following:

\begin{theorem}
\label{rztheorem} (Rothblum \& Zenios\footnote{This theorem combines results from
theorems~4.2 and~4.3 of~\cite{rz92}. The variables $\Um$ and $\Vm$ are used 
here for positive diagonal matrices purely for consistency with that paper despite 
their exclusive use elsewhere in this paper to refer to unitary matrices. Although 
not stated explicitly by Rothblum and Zenios, Program~II
is easily verified to be permutation consistent.})
The following are equivalent:
\begin{enumerate}
   \item Program II is feasible.
   \item Program II has an optimal solution.
   \item $\prod_{i=1}^{m}(S_i)^{\mu_i} ~ = ~ \prod_{j=1}^{n}(T_j)^{\eta_j}$.
\end{enumerate}
If a solution exists then the matrix $\Amn'=\Um\Amn\Vm$ is the unique positive
diagonal scaling of $\Amn$ for which the product of the nonzero elements of each row
$i$ is $S_i$ and the product of the nonzero elements of each column $j$ is $T_j$.
\end{theorem}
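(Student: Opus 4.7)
The natural approach is to linearize Program~II by taking elementwise logarithms. On the (full) support of $\Amn$, set $L_{ij}=\log A_{ij}$, and let $u_i=\log U_{ii}$, $v_j=\log V_{jj}$, $s_i=\log S_i$, $t_j=\log T_j$. The requirement that the product of the nonzero entries of row $i$ of $\Um\Amn\Vm$ equal $S_i$ becomes $k_i u_i + \sum_{j\in\mathrm{supp}(i)}(L_{ij}+v_j)=s_i$, where $k_i$ is the number of nonzero entries of row $i$; the column constraints give the analogous $\ell_j v_j + \sum_{i\in\mathrm{supp}(j)}(L_{ij}+u_i)=t_j$. Stacking these gives a linear system $Mw=b$ in $w=(u,v)$, where $M$ depends only on the zero pattern of $\Amn$ and the right-hand side $b$ collects $s,t$ minus the contributions of $L$. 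Program~II itself is then a strictly convex (entropic) optimization whose KKT conditions recover exactly $Mw=b$, so the remainder of the proof reduces to a question about this linear system.

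For the equivalences, $(2)\Rightarrow(1)$ is immediate. I would prove $(1)\Rightarrow(2)$ by noting that strict convexity of the objective, together with coercivity along directions orthogonal to $\ker(M)$ (the lineality space of the feasible set, along which the objective is constant), yields a minimizer whenever the feasible set is nonempty. For $(1)\Leftrightarrow(3)$ I would apply the Fredholm alternative: $Mw=b$ is solvable iff $b$ is orthogonal to $\ker(M^{\mathsf T})$. The vectors $\mu\in\mathbb{R}^m$ and $\eta\in\mathbb{R}^n$ of Rothblum--Zenios are constructed precisely so that $(\mu,-\eta)$ spans $\ker(M^{\mathsf T})$, with one generator per connected component of the bipartite support graph of $\Amn$. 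Writing out the orthogonality relation, the $L$-dependent terms cancel by the construction of $(\mu,\eta)$, leaving $\sum_i \mu_i s_i=\sum_j \eta_j t_j$, which exponentiates to the multiplicative identity in condition~(3).

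For uniqueness of $\Amn'$, I would observe that although the pair $(\Um,\Vm)$ is determined only modulo $\ker(M)$, every element of $\ker(M)$ adds a componentwise constant $c$ to $u_i$ and subtracts $c$ from $v_j$ on the same connected component of the support graph, leaving each product $U_{ii}A_{ij}V_{jj}$ pointwise invariant; hence $\Amn'$ itself is uniquely determined. Permutation consistency follows because the whole construction is indexed by rows and columns and respects relabelings. The hard part is the precise identification of $\ker(M^{\mathsf T})$ with the span of the $(\mu,-\eta)$ vectors and the verification that the $L$-terms cancel under the associated orthogonality relation; this rests on a careful analysis of the bipartite support graph and is exactly where the substantive technical content of Rothblum--Zenios resides, which I would quote directly from their Theorems~4.2 and~4.3 rather than reprove.
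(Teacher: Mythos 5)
This theorem is imported: the paper offers no proof of it at all, deferring entirely to Theorems~4.2 and~4.3 of Rothblum \& Zenios \cite{rz92} (the footnote says as much), and the only thing the paper itself adds is the remark that permutation consistency of Program~II is ``easily verified.'' So there is no in-paper argument to compare against; your proposal is a reconstruction of the \emph{cited} proof. As such it is a reasonable outline of the standard route --- elementwise logarithms turn the line-product constraints into a linear system $Mw=b$ whose coefficient matrix depends only on the zero pattern, feasibility of Program~II reduces to solvability of that system, the Fredholm alternative converts solvability into orthogonality to $\ker(M^{\mathsf T})$, and exponentiating the orthogonality relation gives condition~(3). Your uniqueness argument for $\Amn'$ (elements of $\ker(M)$ shift $u$ by $+c$ and $v$ by $-c$ on each connected component of the support graph, leaving every product $U_{ii}A_{ij}V_{jj}$ fixed) is correct, and is in fact the same observation the paper relies on later when it proves Theorem~\ref{rzcor} and the uniqueness of $\Aginv$ in Appendix~\ref{facta}.

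Two cautions. First, you assert that $(\mu,-\eta)$ spans $\ker(M^{\mathsf T})$ ``with one generator per connected component,'' but condition~(3) is a \emph{single} scalar identity; if the bipartite support graph has $k>1$ components the cokernel is $k$-dimensional and one equation cannot express orthogonality to all of it. Reconciling this requires the precise definitions of $\mu$ and $\eta$ from \cite{rz92} (or a chainability/connectivity hypothesis), which is exactly the piece you declare you would quote rather than reprove --- so your proposal, like the paper, ultimately rests on the cited source for the substantive content. Second, the step $(1)\Rightarrow(2)$ via ``strict convexity plus coercivity orthogonal to the lineality space'' is plausible but not automatic for entropy-type objectives, whose level sets can fail to be bounded in the presence of zero limits; this equivalence is itself one of the nontrivial assertions of \cite{rz92} rather than a routine consequence. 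Neither caution is fatal, since the present paper only ever invokes the theorem with all $S_i=T_j=1$, where condition~(3) holds vacuously whatever $\mu$ and $\eta$ are; but as a self-contained proof of the stated theorem your sketch has the same status as the paper's: an appeal to Rothblum--Zenios with a helpful scaffold around it.
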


Although $\Amn'$ is unique in Theorem~\ref{rztheorem}, the diagonal scaling 
matrices $\Um$ and $\Vm$ may not be. The implications of this, and the question
of existence, are addressed by the following theorem.  

\begin{theorem}
\label{rzcor}
For any nonnegative matrix $\Amn\in\mathbb{R}^{m\times n}$ with full support
there exist positive diagonal matrices $\Um\in\mathbb{R}^{m\times m}$ 
and $\Vm\in\mathbb{R}^{n\times n}$ such that the product of the 
nonzero elements of each row $i$ and column $j$ of $\Xm=\Um\Amn\Vm$ is $1$,
and $\Xm=\Um\Amn\Vm$ is the unique positive diagonal scaling of $\Amn$ which
has this property. Furthermore, if there do exist distinct positive diagonal 
matrices $\Um_1$, $\Vm_1$, $\Um_2$, and $\Vm_2$ such that 
\begin{equation}
\Xm ~=~ \Um_1\Amn\Vm_1 ~=~ \Um_2\Amn\Vm_2
\end{equation}
then $\inv{\Vm}_1\Apinv\inv{\Um}_1$ = $\inv{\Vm}_2\Apinv\inv{\Um}_2$.
\end{theorem}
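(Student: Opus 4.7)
The plan has two pieces: (i) existence and uniqueness of $\Xm$ follow essentially directly from Theorem~\ref{rztheorem}, and (ii) the pseudoinverse invariance claim reduces to showing that whenever $\Amn=\Dm\Amn\Em$ for positive diagonal $\Dm$ and $\Em$, the identity $\Em\Apinv\Dm=\Apinv$ holds. For (i), I would instantiate Program~II with $S_i=1$ for all $i$ and $T_j=1$ for all $j$; the feasibility criterion $\prod_i S_i^{\mu_i}=\prod_j T_j^{\eta_j}$ then collapses to $1=1$ regardless of the values of $\mu$ and $\eta$, so Theorem~\ref{rztheorem} supplies positive diagonals $\Um$ and $\Vm$ for which every nonzero row and column product of $\Xm=\Um\Amn\Vm$ equals $1$, with uniqueness of $\Xm$ provided by the final sentence of that theorem.

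For (ii), set $\Dm\doteq\inv{\Um}_1\Um_2$ and $\Em\doteq\Vm_2\inv{\Vm}_1$, both positive diagonal. The hypothesis $\Um_1\Amn\Vm_1=\Um_2\Amn\Vm_2$ then rewrites as $\Amn=\Dm\Amn\Em$, and the target identity $\inv{\Vm}_1\Apinv\inv{\Um}_1=\inv{\Vm}_2\Apinv\inv{\Um}_2$ rearranges (by multiplying through by appropriate diagonals) to $\Em\Apinv\Dm=\Apinv$. Comparing entries of $\Amn=\Dm\Amn\Em$, any nonzero $\Amn_{ij}$ must satisfy $\Amn_{ij}=d_ie_j\Amn_{ij}$, hence $d_ie_j=1$. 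Grouping rows by the distinct values $\delta_1,\ldots,\delta_p$ of $d_i$ and columns by the distinct values $\epsilon_1,\ldots,\epsilon_q$ of $e_j$, the support of $\Amn$ is confined to the blocks $(I_k,J_l)$ with $\delta_k\epsilon_l=1$. Because $\Amn$ has full support, each $\delta_k$ must pair with a unique $\epsilon_l=1/\delta_k$ (and vice versa), yielding a bijection $\sigma$ between row groups and column groups; after a simultaneous row and column permutation, $\Amn$ becomes block-diagonal with diagonal blocks $\Amn|_{I_k,J_{\sigma(k)}}$, each of which inherits full support.

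The Moore--Penrose pseudoinverse of a block-diagonal matrix is block-diagonal with blocks equal to the pseudoinverses of the original blocks, so (with row and column roles swapped) $\Apinv$ has support contained in the same matched block pattern. Thus any nonzero $\Apinv_{ji}$ forces $j\in J_{\sigma(k)}$ and $i\in I_k$ for some $k$, whence $e_jd_i=\epsilon_{\sigma(k)}\delta_k=1$ and $(\Em\Apinv\Dm)_{ji}=e_j\Apinv_{ji}d_i=\Apinv_{ji}$, establishing $\Em\Apinv\Dm=\Apinv$. The main obstacle is the combinatorial block-diagonal reduction: one must argue carefully from full support that the bijection $\sigma$ is well-defined on the full partitions and that each diagonal block inherits full support; once that structural fact is in hand, the standard block-pseudoinverse formula makes the invariance immediate.
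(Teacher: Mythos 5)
Your proposal is correct, and while part (i) matches the paper exactly (instantiate Program~II with every $S_i=T_j=1$ so the feasibility condition collapses to $1=1$), your part (ii) takes a genuinely different route. The paper proves the \emph{Furthermore} claim in Appendix~\ref{facta} by a purely algebraic rank-factorization argument: writing $\Xm=\Fm\Gm$, observing that $\Dma\Xm\Ema=\Dmb\Xm\Emb$ gives $\Xm=\Dmib\Dma\Xm\Ema\Emib$ and hence the alternative rank factorization $\Xm=(\Dmib\Dma\Fm)(\Gm\Ema\Emib)$, then applying the identity $\Xpinv=\Gmt\cdot\inv{(\Fmt\Xm\Gmt)}\cdot\Fmt$ to both factorizations to obtain $\Xpinv=\Ema\Emib\Xpinv\Dmib\Dma$, from which the claim follows by multiplying through by $\Emia$ and $\Dmia$. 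That route never inspects the support of the matrix, so it works for arbitrary matrices, with no use of nonnegativity or full support. Your route instead reduces the claim to $\Em\Apinv\Dm=\Apinv$ when $\Amn=\Dm\Amn\Em$, extracts the entrywise constraint $d_ie_j=1$ on the support, uses full support to build the bijection $\sigma$ between row and column groups (well-defined since each row and column must carry a nonzero entry and the values $1/\delta_k$ are distinct), permutes to block-diagonal form, and invokes the block-diagonal pseudoinverse formula. This is sound, and it buys structural insight the paper's identity does not provide: it exhibits exactly how non-uniqueness of the scaling pair can arise, namely only through a block decomposition of the support with a constant rescaling on each block. The trade-off is that your argument depends on full support and on the block-pseudoinverse fact, whereas the paper's rank-factorization computation is shorter and strictly more general.
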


\begin{proof}
The existence (and uniqueness) of a solution for Program~II according to 
Theorem~\ref{rztheorem} is equivalent to
\begin{equation}
   \prod_{i=1}^{m}(S_i)^{\mu_i} ~ = ~ \prod_{j=1}^{n}(T_j)^{\eta_j}
\end{equation}
which holds unconditionally, i.e., independent of $\mu$ and $\eta$, for
the case in which every $S_i$ and $T_j$ is~1. Proof of the {\em Furthermore} 
statement is given in Appendix~\ref{facta}.
\end{proof}

\begin{lemma}
\label{gendef}
Given an $m\times n$ matrix $\Amn$, let $\Xm$ be the matrix formed by removing
every row and column of $\Abs[\Amn]$ for which all elements are equal to zero, and define
$r[i]$ to be the row of $\Xm$ corresponding to row $i$ of $\Amn$ and $c[j]$
to be the column of $\Xm$ corresponding to column $j$ of $\Amn$. Let $\Um$ and
$\Vm$ be the diagonal matrices guaranteed to exist from the application of Program~II 
to $\Xm$ according to Theorem~\ref{rzcor}. 
Existence of a general-diagonal scale function according to Definition~\ref{defdg} for
$\Amn$ is provided by instantiating $\DGL[\Amn]=\Dm$ and $\DGR[\Amn]=\Em$ where
\begin{eqnarray}
   \Dm(i,i) & = &  
   \begin{cases}
              \Um(r[i],r[i])  & \textnormal{row $i$ of $\Amn$ is not zero}\\
             1 & \textnormal{otherwise}
   \end{cases}
\\
~ & ~ & \nonumber
\\
   \Em(j,j) & = &  
   \begin{cases}
              \Vm(c[j],c[j])  & \textnormal{column $j$ of $\Amn$ is not zero}\\
             1 & \textnormal{otherwise}
   \end{cases}
\end{eqnarray}
\end{lemma}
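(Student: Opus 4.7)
The plan is to verify each of the four identities in Definition \ref{defdg} for the proposed instantiation of $\DGL[\Amn]$ and $\DGR[\Amn]$, using Theorem \ref{rzcor} as the main engine. The key fact is that Program~II, applied to the support submatrix $\Xm$, produces a scaling whose canonical form $\Um\Xm\Vm$ (rows and columns of nonzero entries both having product~$1$) is uniquely determined by $\Xm$; and, as the footnote to Theorem \ref{rztheorem} notes, Program~II is permutation consistent.

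First I would dispatch the scale-invariance condition \eqref{defdg1}. Because $\Dmp$ and $\Emp$ are strictly positive diagonals, they preserve the zero pattern of $\Amn$, so the support submatrix of $\Dmp\Amn\Emp$ is a positive diagonal rescaling $\Xm' = \Dmp'\Xm\Emp'$ of $\Xm$, where the primes denote restriction to the support. Since $\Xm'$ admits the same unique canonical form as $\Xm$ under Program~II, equating entries at a nonzero position $(i,j)$ of $\Amn$ gives the rank-1 identity $\DGL[\Dmp\Amn\Emp]_{ii}\,\Dmp(i,i)\,\Emp(j,j)\,\DGR[\Dmp\Amn\Emp]_{jj} = \DGL[\Amn]_{ii}\,\DGR[\Amn]_{jj}$ needed for \eqref{defdg1} on the support; at zero positions of $\Amn$ both sides vanish. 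The sign (or phase) pattern of $\Amn$ is preserved by multiplication with positive diagonals, so the magnitude-level equality lifts directly to the stated matrix equality.

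For \eqref{defdg2}, I would invoke the permutation consistency of Program~II: letting $\Perm'$ and $\Qm'$ denote the permutations induced on the support, Program~II applied to $\Perm'\Xm\Qm'$ returns $\Perm'\Um\Perm'^{\textnormal{T}}$ and $\Qm'^{\textnormal{T}}\Vm\Qm'$. Lifted to full matrices, this gives $\DGL[\Perm\Amn\Qm] = \Perm\,\DGL[\Amn]\,\Permt$ and $\DGR[\Perm\Amn\Qm] = \Qmt\,\DGR[\Amn]\,\Qm$, and substituting into the left side of \eqref{defdg2} reduces it in one line to the right side. Conditions \eqref{defdg3} and \eqref{defdg4} follow immediately, since a unitary-diagonal multiplication on either side leaves $\Abs[\Amn]$ unchanged entrywise; the input $\Xm$ to Program~II is therefore identical for $\Amn$ and $\Dmu\Amn\Dmv$, and, treating Program~II as deterministic on a fixed input, the returned $\Um$ and $\Vm$, hence $\Dm$ and $\Em$, are also identical.

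The main obstacle I anticipate is the bookkeeping around zero rows and columns of $\Amn$: the padding $\Dm(i,i) = 1$ and $\Em(j,j) = 1$ at such indices must be reconciled with the nonzero-support contributions from Program~II in each of the four identities. This is not deep—zero rows and columns remain zero under every operation considered, so the padded $1$'s are multiplied against zeros and contribute nothing—but it is the one place where care is required. A secondary subtlety is that Theorem \ref{rzcor} guarantees uniqueness only of the scaled matrix, not of the diagonals $\Um$ and $\Vm$ themselves; however, only matrix-level uniqueness is needed for \eqref{defdg1} and \eqref{defdg2}, and determinism of Program~II on a fixed $\Xm$ is enough for \eqref{defdg3}--\eqref{defdg4}.
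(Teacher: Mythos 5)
Your proposal is correct and follows essentially the same route as the paper: it derives the scale-invariance condition from the uniqueness of the canonically scaled matrix guaranteed by Theorem~\ref{rzcor} (noting that positive diagonals preserve the support, and that zero rows/columns are inert under the padded $1$'s), and obtains the permutation and unitary-diagonal conditions from the index-consistency of the construction and the fact that Program~II sees only $\Abs[\Amn]$. Your explicit observation that conditions \eqref{defdg3}--\eqref{defdg4} need determinism of Program~II on a fixed input (since only the scaled matrix, not the diagonals, is unique) is a small point of care that the paper leaves implicit, but it does not change the argument.
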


\begin{proof}
In the case that $\Amn$ has full support so that $\Xm=\Abs[\Amn]$ then
Theorem~\ref{rztheorem} guarantees that 
$\DGL[\Xm]\cdot\Xm\cdot\DGR[\Xm]$ is the unique diagonal scaling of 
$\Xm$ such that the product of the nonzero elements of each row and
column is 1. Therefore, the scale-invariance condition of Eq.(\ref{defdg1}):
\begin{equation}
    \DGL[\Dmp\Xm\Emp]\cdot(\Dmp\Xm\Emp)\cdot\DGR[\Dmp\Xm\Emp] 
            ~ = ~ \DGL[\Xm]\cdot\Xm\cdot\DGR[\Xm]
\end{equation}
holds for any positive diagonals $\Dmp$ and $\Emp$
as required. For the case of general $\Amn$ the construction 
defined by Lemma~\ref{gendef} preserves uniqueness with respect to nonzero 
rows and columns of $\DGL[\Amn]\cdot\Abs[\Amn]\cdot\DGR[\Amn]$,
i.e., those which correspond to the rows and columns of $\Um\Xm\Vm$, 
by the guarantee of Theorem~\ref{rztheorem}, and any row or column with all 
elements equal to zero is inherently scale-invariant, so  Eq.(\ref{defdg1}) holds
unconditionally for the construction defined by Lemma~\ref{gendef}.
The remaining conditions (permutation consistency and invariance
with respect to unitary diagonals) hold equivalently to the proof
of Lemma~\ref{nonzerodef}.
\end{proof}

At this point it is possible to establish the existence of a fully-general,
unit-consistent, generalized matrix inverse.

\begin{theorem}
\label{genginv}
For an $m\times n$ matrix $\Amn$ there exists an operator 
\begin{equation}
   \Aginv ~ \doteq ~ \DGR[\Amn]\cdot
         \pinv{\left(\DGL[\Amn]\cdot\Amn\cdot\DGR[\Amn]\right)}\cdot\DGL[\Amn]
\end{equation}
which satisfies for any nonsingular diagonal matrices $\Dm$ and $\Em$:
\begin{eqnarray}
   \Amn\Aginv\Amn & = & \Amn, \label{g1}\\
   \Aginv\Amn\Aginv & = & \Aginv, \label{g2}\\
   \ginv{(\Dm\Amn\Em)} & = & \Emi\Aginv\Dmi, \label{g3}\\
   \mbox{rank}[\Aginv] & = & \mbox{rank}[\Amn] \label{g5}.
\end{eqnarray}
\end{theorem}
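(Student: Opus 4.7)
The plan is to mirror the structure of the proof of Theorem~\ref{ginvtp}, but to replace every appeal to the explicit log-based scale function of Lemma~\ref{nonzerodef} with the more general construction of Lemma~\ref{gendef}, supported where necessary by the uniqueness guarantees of Theorem~\ref{rzcor}. The four claims to establish are the two classical MP-type identities (\ref{g1})--(\ref{g2}), the unit-consistency identity (\ref{g3}), and the rank identity (\ref{g5}).

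For (\ref{g1}) and (\ref{g2}) I would introduce the shorthand $\Bm\doteq\DGL[\Amn]\cdot\Amn\cdot\DGR[\Amn]$, which is well-defined because Lemma~\ref{gendef} guarantees $\DGL[\Amn]$ and $\DGR[\Amn]$ are strictly positive diagonals, hence invertible. Then $\Aginv=\DGR[\Amn]\cdot\pinv{\Bm}\cdot\DGL[\Amn]$, and the same telescoping trick used in Theorem~\ref{ginvtp} goes through verbatim: insert $\inv{\DGL[\Amn]}\DGL[\Amn]$ and $\DGR[\Amn]\inv{\DGR[\Amn]}$ on each side of the inner $\Amn$ to reconstitute $\Bm$, and use $\Bm\pinv{\Bm}\Bm=\Bm$ and $\pinv{\Bm}\Bm\pinv{\Bm}=\pinv{\Bm}$. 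Rank consistency (\ref{g5}) follows immediately, since positive diagonals are nonsingular and the Moore-Penrose inverse preserves rank, so $\mbox{rank}[\Aginv]=\mbox{rank}[\pinv{\Bm}]=\mbox{rank}[\Bm]=\mbox{rank}[\Amn]$.

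For the unit-consistency claim (\ref{g3}) I would use polar decompositions $\Dm=\Dmp\Dmu$ and $\Em=\Emp\Emu$, then proceed in three moves: (i) by Eqs.(\ref{defdg3})--(\ref{defdg4}) absorb the diagonal unitaries, obtaining $\DGL[\Dm\Amn\Em]=\DGL[\Dmp\Amn\Emp]$ and $\DGR[\Dm\Amn\Em]=\DGR[\Dmp\Amn\Emp]$; (ii) commute $\Dmu$ and $\Emu$ through the surrounding positive diagonals inside the pseudoinverse and strip them off using the Moore-Penrose unitary-consistency identity, exactly as in the passage from Eq.(\ref{gpinvua}) to Eq.(\ref{gpinvub}); (iii) invoke the scaled-product invariance of Eq.(\ref{defdg1}) to replace $\DGL[\Dmp\Amn\Emp]\cdot(\Dmp\Amn\Emp)\cdot\DGR[\Dmp\Amn\Emp]$ with $\Bm$ inside the pseudoinverse. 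What remains is to identify the outer factors with $\Empi\,\DGR[\Amn]$ on the left and $\DGL[\Amn]\,\Dmpi$ on the right.

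The hard part is precisely that last identification: in the general setting Lemma~\ref{gendef} does \emph{not} give unique scale matrices when $\Amn$ has zero rows/columns or block structure that Program~II cannot resolve uniquely, so the individual identities $\DGL[\Dmp\Amn\Emp]=\DGL[\Amn]\,\Dmpi$ and $\DGR[\Dmp\Amn\Emp]=\Empi\,\DGR[\Amn]$ used implicitly in Theorem~\ref{ginvtp} need not hold. This is exactly where I would invoke the \emph{Furthermore} part of Theorem~\ref{rzcor}: it asserts that whenever two pairs of positive diagonals produce the same scaled matrix, the composite expression appearing in $\Aginv$ is the same for both choices. Applying this with $(\Um_1,\Vm_1)=(\DGL[\Dmp\Amn\Emp]\Dmp,\Emp\DGR[\Dmp\Amn\Emp])$ and $(\Um_2,\Vm_2)=(\DGL[\Amn],\DGR[\Amn])$---both of which achieve the same scaling of $\Amn$ by Eq.(\ref{defdg1})---collapses the outer factors to $\Empi\Aginv\Dmpi$, and combining with the stripped unitaries $\Emui$ and $\Dmui$ yields $\Emi\Aginv\Dmi$ as required.
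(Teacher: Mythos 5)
Your proposal is correct and follows essentially the same route as the paper, whose proof simply declares that the argument for Theorem~\ref{ginvtp} carries over with Lemma~\ref{gendef} in place of Lemma~\ref{nonzerodef}. You are in fact more careful than the paper at the one point where ``carries over'' is not literal---the step requiring $\DGL[\Dmp\Amn\Emp]=\DGL[\Amn]\cdot\Dmpi$ and $\DGR[\Dmp\Amn\Emp]=\Empi\cdot\DGR[\Amn]$, which need not hold for the Program~II scaling since its diagonal factors are not unique---and your appeal to the \emph{Furthermore} clause of Theorem~\ref{rzcor} is precisely the repair the paper supplies only implicitly, via Appendix~\ref{facta} and the uniqueness remark following the theorem.
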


\begin{proof}
The proof of Theorem~\ref{ginvtp} applies unchanged to
Theorem~\ref{genginv} except that the
elemental-nonzero condition imposed by Lemma~\ref{nonzerodef}
is removed by use of Lemma~\ref{gendef}.
\end{proof}

For completeness, the example of Eqs.(\ref{beginexample}-\ref{endexample})
with
\begin{equation}
\Dm ~=~ \left[\begin{array}{cc}
                         1 & 0\\ 0 & 2 \end{array}\right] ~ ~ ~ ~
\Amn ~=~ \left[\begin{array}{cc}
                         1/2 & -1/2\\ 1/2 & -1/2 \end{array}\right] ~ ~ ~ ~
\end{equation}
can be revisted to verify that 
\begin{equation}
\ginv{(\Dm\Amn\Dmi)} ~=~ \Dm\Aginv\Dmi ~=~ 
                 \left[\begin{array}{cc}
                         1/2 & ~1/4\\ -1 & -1/2 \end{array}\right] 
\end{equation}
where $\ginv{(\Dm\Amn\Dmi)}=\Dm\Aginv\Dmi$ as
expected. Extending the example with
\begin{equation}
\Em ~=~ \left[\begin{array}{cc}
                         5 & ~0\\ 0 & -3 \end{array}\right]
\end{equation}
it can be verified that 
\begin{equation}
\ginv{(\Dm\Amn\Em)} ~=~ \Emi\Aginv\Dmi ~=~ 
                 \left[\begin{array}{cc}
                         1/10 & 1/20\\ 1/6 & 1/12 \end{array}\right] 
\end{equation}
with equality as expected.

In practice the state space of interest may comprise subsets of variables
having different assumed relationships. For example, assume that 
$m$ state variables have incommensurate units while the remaining $n$
state variables are defined in a common Euclidean space, i.e., their relationship
should be preserved under orthogonal transformations. This assumption
requires that a linear transformation $\Amn$ should be consistent 
with respect to state-space transformations of the form
\begin{equation}
   \TR ~=~
   \left[\begin{array}{cc}
            \Dm   & {\bf 0} \\  
         {\bf 0} &    \Rm
   \end{array}\right]
\end{equation}
where $\Dm$ is a nonsingular $m\times m$ diagonal matrix and $\Rm$ is an
$n\times n$ orthogonal matrix. Thus the inverse of $\Amn$ cannot be 
obtained by applying 
either the UC inverse or the Moore-Penrose inverse, and the two inverses 
cannot be applied separately to distinct subsets of the state variables because 
all of the variables mix under the transformation. This can be seen from a
block-partition:
\begin{eqnarray}
   \begin{array}{rcl}
      \Amn & = &
         \left[ \begin{array}{cc} {\bf W} & {\bf X}\\
                             {\bf Y}& {\bf Z}\end{array} \right]
                       \begin{array}{l} {\bf \}}~m \\{\bf \}}~n \end{array}\\
                        & & 
                                \begin{array}{c}
                                   \;\underbrace{\;}_m\;\underbrace{\;}_n
                                \end{array}
   \end{array}
\end{eqnarray}
and noting that consistency in this case requires a generalized inverse that satisfies:
\begin{equation}
   \label{trblk}
   \tinv{(\TR_1\cdot\Amn\cdot\TR_2)} ~=~
         \tinv{\left[ 
                  \begin{array}{cc} \Dm_1{\bf W}\Dm_2 & \Dm_1{\bf X}\Rm_2\\
                                                 \Rm_1{\bf Y\Dm_2}& \Rm_1{\bf Z}\Rm_2
                  \end{array} \right]}
        ~=~ \inv{\TR_2}\cdot\Atinv\cdot\inv{\TR_1}~. 
\end{equation}
In the case of nonsingular $\Amn$ the partitioned inverse is unique:
\begin{equation}
   \inv{\Amn} ~=~
   \left[
     \begin{array}{cc}
     ({\bf W}-{\bf X}{\bf Z}^{-1}{\bf Y})^{-1} & -{\bf W}^{-1}{\bf X}({\bf Z}-{\bf Y}{\bf W}^{-1}{\bf X})^{-1} \\
      -{\bf Z}^{-1}{\bf Y}({\bf W}-{\bf X}{\bf Z}^{-1}{\bf Y})^{-1} & ({\bf Z}-{\bf Y}{\bf W}^{-1}{\bf X})^{-1}
     \end{array}
    \right]
\end{equation}
and is unconditionally consistent. Respecting the block constraints implicit from Eq.(\ref{trblk}), the desired
generalized inverse for singular $\Amn$ under the present assumptions can be verified as:
\begin{equation}
   \Atinv ~=~
   \left[
     \begin{array}{cc}
     \ginv{({\bf W}-{\bf X}\pinv{{\bf Z}}{\bf Y})} & -\ginv{{\bf W}}{\bf X}\pinv{({\bf Z}-{\bf Y}\ginv{{\bf W}}{\bf X})} \\
      -\pinv{{\bf Z}}{\bf Y}\ginv{({\bf W}-{\bf X}\pinv{{\bf Z}}{\bf Y})} & \pinv{({\bf Z}-{\bf Y}\ginv{{\bf W}}{\bf X})}
     \end{array}
    \right] .
\end{equation}
The general case involving different assumptions for more than two subsets of state variables 
(possibly different for the left and right spaces of the transformation) can be solved analogously
with appropriate partitioning.

The generalized inverse of Theorem~\ref{genginv} is unique when instantiated
using the construction defined by Lemma~\ref{gendef} by virtue of
the uniqueness of both the Moore-Penrose inverse and the scaling of
Theorem~\ref{rztheorem} (alternative scalings are discussed in
Appendix~\ref{altsca}). What is most important for present 
purposes is that the approach for obtaining  
unit-consistent generalized inverses can be efficiently
applied to a wide variety of other matrix decompositions and
operators (including other generalized matrix inverses) to impose
unit consistency. Some specific
examples are briefly considered in the following sections.

\section{UC Example: Estimating Linearized Approximations
         to Nonlinear Transformations}
\label{application}

A fundamental problem in applications ranging from
system identification to machine learning is to ascertain
an estimate of an unknown nonlinear transformation 
based on a given sample set of input and output 
vectors~\cite{haber,haykin,nelles}. More
specifically, given an $m\times n$ matrix $\Xm$ with
columns representing a domain set of $n$ $m$-dimensional
input vectors, and a $q\times n$ matrix $\Ym$ with columns 
representing the range set of $p$-dimensional
output vectors, i.e., each column $\Ym_i$ gives $f(\Xm_i)$
for the unknown transformation $f(\cdot)$, determine a
$q\times m$ matrix $\Fm$ that linearly 
approximates the unknown nonlinear mapping as:
\begin{equation}
   \Fm\Xm ~ \approx ~ \Ym.
\end{equation}
If $\Xm$ and $\Ym$ are both square matrices of full
rank then the unique solution $\Fm=\Ym\Xmi$ is 
entirely structurally determined, i.e.,
there are insufficient samples to reveal any
information beyond a simple linear mapping.
As the number of samples increases, i.e., as $n$
becomes increasingly larger than $m$, information
about the unknown nonlinear mapping from $\Xm$ to $\Ym$
becomes available but a unique $\Xmi$ no longer
exists. 

Use of the Moore-Penrose inverse 
provides a solution $\FP=\Ym\pinv{\Xm}$, and its
estimation/prediction properties can be analyzed
in terms of the consistency conditions that it
preserves. Specifically, $\FP$ is invariant with
respect to the application of a unitary transformation 
$\Um$ to the rows of $\Xm$ and $\Ym$ as
$\Xm\Um$ and $\Ym\Um$:
\begin{eqnarray}
   \FP & = & (\Ym\Um)\pinv{(\Xm\Um)} \\
   ~     & = & \Ym\Um\Umi\Xpinv \\
   ~     & = & \Ym\Xpinv
\end{eqnarray} 
and therefore preserves right-unitary
consistency:
\begin{equation}
   \FP(\Xm\Um) ~ = ~ (\FP\Xm)\Um.
\end{equation} 
This consistency with respect to arbitrary
unitary linear combinations of the input vectors
(i.e., the columns of $\Xm$) implies that
$\FP$ is insensitive to functional 
dependencies which exist purely between
corresponding columns of $\Xm$ and $\Ym$.
By contrast, the solution $\FR=\Ym\Xrinv$ obtained
using a right unit-consistent inverse is invariant
with respect to an arbitrary nonzero scaling
of individual columns of $\Xm$ in the form
of a diagonal matrix $\Dm$:
\begin{eqnarray}
   \FR & = & (\Ym\Dm)\rinv{(\Xm\Dm)} \\
   ~     & = & \Ym\Dm\Dmi\Xrinv \\
   ~     & = & \Ym\Xrinv.
\end{eqnarray} 
and therefore preserves right-{\em diagonal}
unit consistency:
\begin{equation}
   \FR(\Xm\Dm) ~ = ~ (\FR\Xm)\Dm.
\end{equation} 
instead of right-{\em unitary} consistency.

Diagonal consistency does not enforce consistency
with respect to a linear mixing of the input vectors
and is therefore strongly sensitive to the functional 
relationship defined by the $\Xm_i$ and $\Ym_i$ pairs. 
This sensitivity can be demonstrated empirically by
applying a nonlinear transformation to the columns
of a randomly generated $m\times n$ matrix $\Xm$
to produce a matrix $\Ym$ and determining the 
percentage of predictions $\FR\Xm_i$ that satisfy
\begin{equation}
    \norm{\FR\Xm_i-\Ym_i} ~ < ~ \norm{\FP\Xm_i-\Ym_i}
\end{equation} 
for a given {\em vector} norm $\norm{\cdot}$. Figure~1
shows averaged results for a series of such tests. Each
test consists of a randomly-generated nonlinear 
transformation applied to the columns of an 
$m\times n$ matrix $\Xm$ of univariate-normal
i.i.d-sampled elements. The transformation for 
each test is a degree-$m$ polynomial function 
of the elements of $\Xm_i$ with univariate-normal 
i.i.d-sampled term coefficients. The percentage
of vectors $\Xm_i$ satisfying 
$\norm{\FR\Xm_i-\Ym_i}_1 <  \norm{\FP\Xm_i-\Ym_i}_1$,
is then determined. The results show that $\FR$ 
provides more accurates estimates than $\FP$ as
the dimensionality $m$ increases, i.e., as more
information about the nonlinear function becomes
available.

\begin{figure}[h] 
  \centering
\begin{tabular}{c|c|c}
  {\footnotesize {\em 3D Nonlinear Mapping}} & 
  {\footnotesize {\em 5D Nonlinear Mapping}} & 
  {\footnotesize {\em 7D Nonlinear Mapping}} \\
  \includegraphics[width=2.0in,keepaspectratio]{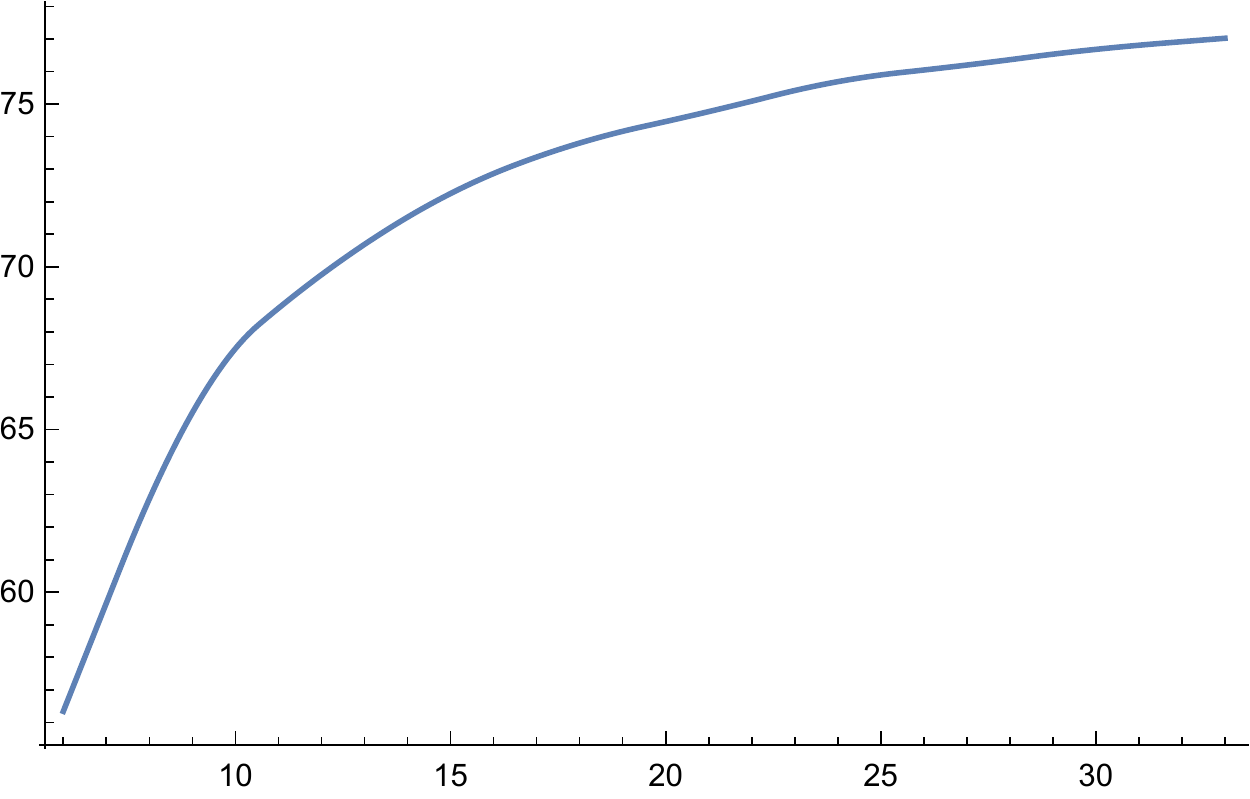} ~  & ~
  \includegraphics[width=2.0in,keepaspectratio]{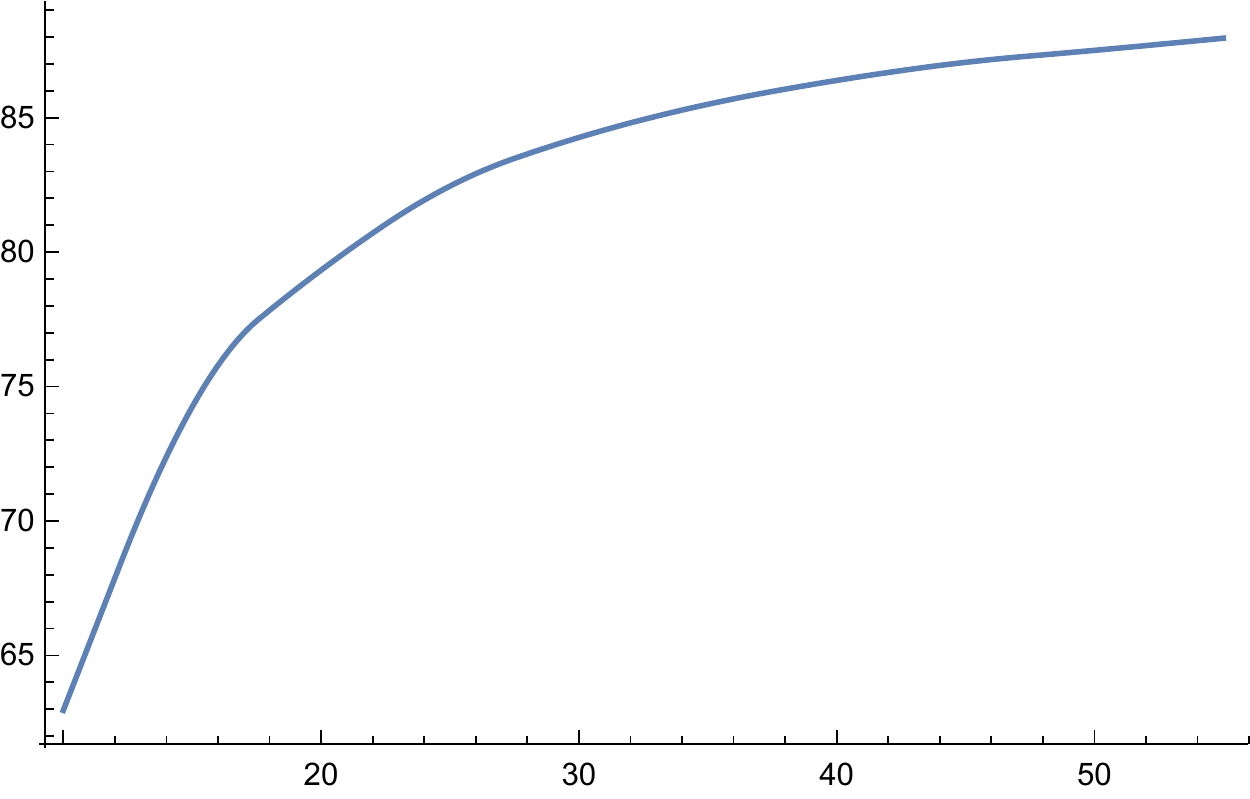} ~  & ~
  \includegraphics[width=2.0in,keepaspectratio]{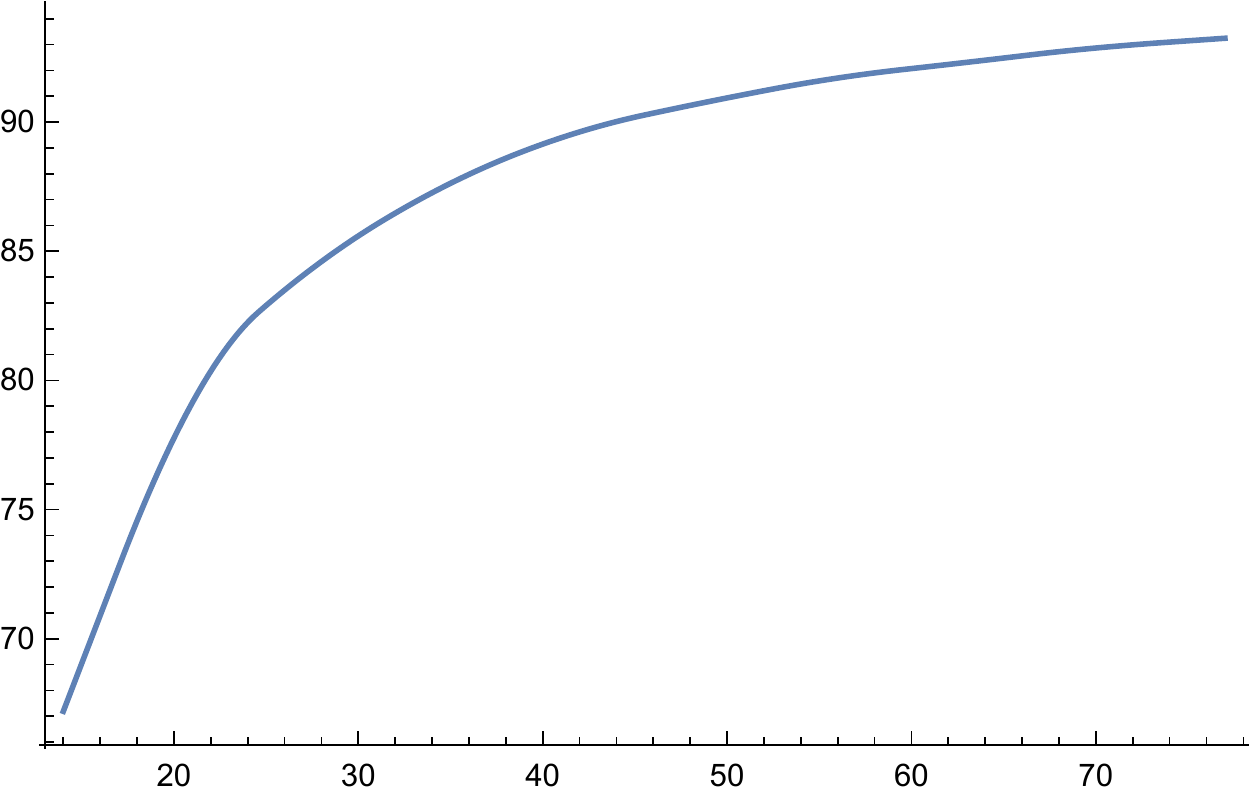} ~ ~ \\ 
\end{tabular}
  \caption{Relative predictive accuracy ($\ell_1$ norm) of $\FR$ versus $\FP$ for 
     random nonlinear polynomial transformations of $m$-dimensional vectors
     for $m=3$, $5$, and $7$. 
    The percentage (vertical axis) of $\FR$ estimates with error smaller than 
    their corresponding $\FP$ estimates increases with the number of samples
    $n$  (horizontal axis).}
  \label{fig:Figure1}
\end{figure}

Figure~2 shows that for sufficiently large $n$ the predictive superiority
of $\FR$ over $\FP$ as shown in Figure~1 does not depend stongly on the 
particular choice of vector norm. 
 
\begin{figure}[h] 
  \centering
\begin{tabular}{c|c|c}
  {\footnotesize {\em $\ell_1$ Measure of Accuracy}} & 
  {\footnotesize {\em $\ell_2$ Measure of Accuracy}} & 
  {\footnotesize {\em $\ell_\infty$ Measure of Accuracy}} \\
  \includegraphics[width=2.0in,keepaspectratio]{R-7-1.pdf} ~  & ~
  \includegraphics[width=2.0in,keepaspectratio]{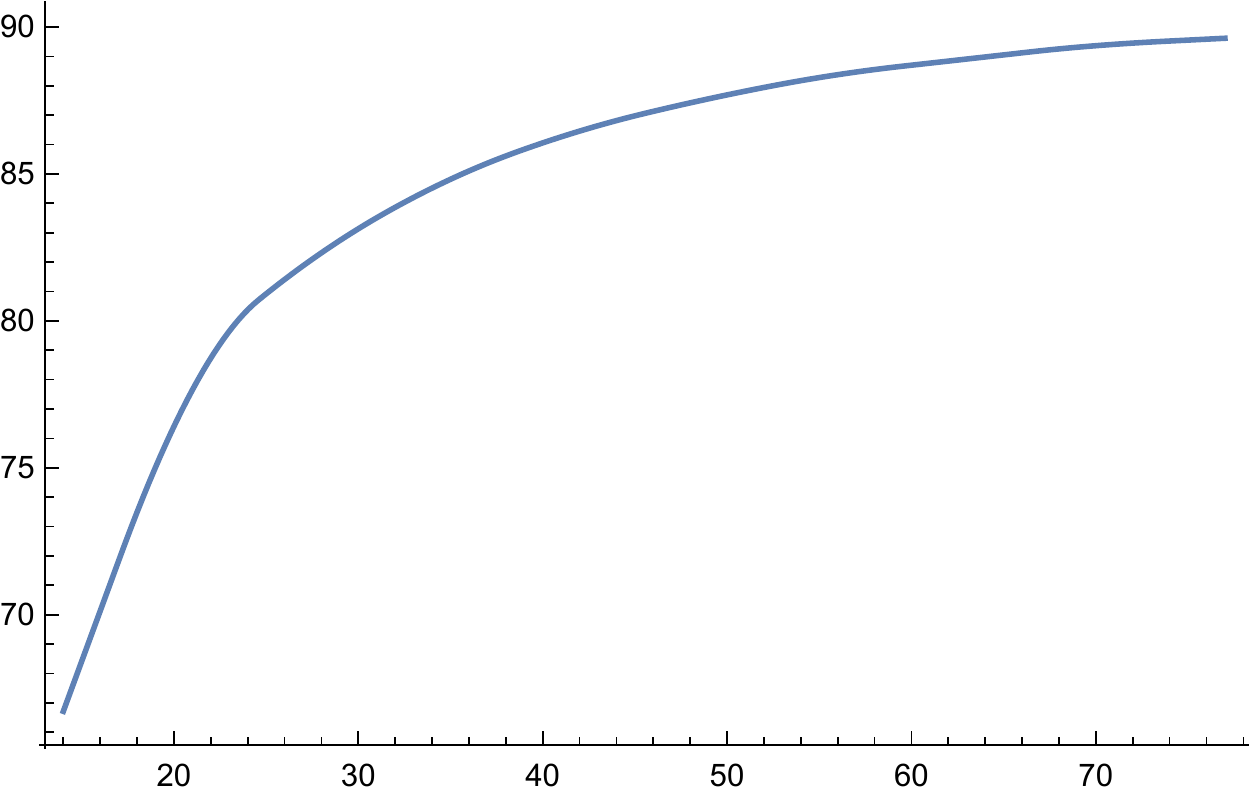} ~  & ~
  \includegraphics[width=2.0in,keepaspectratio]{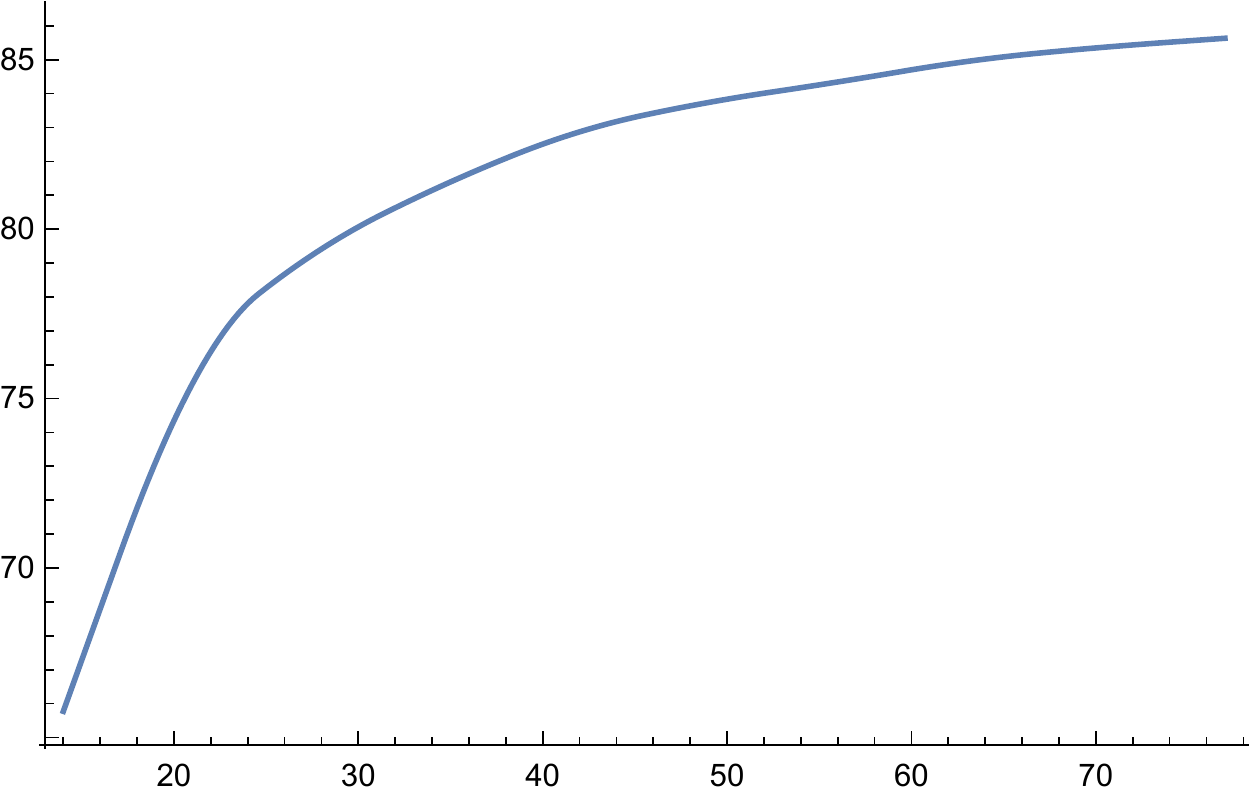} ~ ~ \\ 
\end{tabular}
  \caption{Relative $\ell_1$, $\ell_2$, and $\ell_\infty$ predictive 
     accuracy of $\FR$ versus $\FP$ for $m=7$.}
  \label{fig:Figure2}
\end{figure}

Although the Moore-Penrose inverse is commonly associated 
with least-squares error minimization, Figure~2 shows that
$\FR$ approaches uniform superiority even according to the 
$\ell_2$ vector norm. This is true because superiority is assessed 
here based on per-vector error rather than an average per-element
error over all predictions. It must be emphasized, however, that 
the motivation for this example
is not to provide a particular solution to a particular optimization
problem but rather to show how consistency analysis can illuminate 
salient properties of a given problem.

\section{Unit-Consistent/Invariant Matrix Decompositions}
\label{ucsvd}

The motivation to investigate unit consistency in the
context of generalized matrix inverses extends also to
other areas of matrix analysis. This clearly includes 
transformations $T[\Amn]$ which can be redefined in 
UC form as 
\begin{equation} 
   \inv{\DGL[\Amn]}\cdot 
   T\left[ \DGL[\Amn]\cdot\Amn\cdot\DGR[\Amn]\right]
   \cdot\inv{\DGR[\Amn]}
\end{equation}
and functions $f[\Amn]$ which can be redefined in unit 
scale-invariant form as
$f[\DGL[\Amn]\cdot\Amn\cdot\DGR[\Amn]]$, but it also
extends to matrix decompositions. 

The Singular Value Decomposition (SVD) is among the most
powerful and versatile tools in linear algebra and 
data analytics~\cite{ytt11,michelena93,ltp10,abb00}.
The Moore-Penrose generalized inverse of $\Amn$ can be obtained 
from the SVD of $\Amn$
\begin{equation}
      \Amn ~=~ \Um\Sm\Vmi
\end{equation}
as
\begin{equation}
      \Apinv ~=~ \Vm\tinv{\Sm}\Umi
\end{equation}
where $\Um$ and $\Vm$ are unitary, $\Sm$ is the diagonal matrix
of singular values of $\Amn$, and $\tinv{\Sm}$ is the matrix obtained from
inverting the nonzero elements of $\Sm$. This motivates the
following definition.

\begin{definition}
The Unit-Invariant Singular-Value Decomposition
(UI-SVD) is defined as
\begin{equation}
   \Amn ~=~ \Dm\cdot\Um\Sm\Vmi\cdot\Em
\end{equation}
with $\Dm=\inv{\DGL[\Amn]}$, $\Em=\inv{\DGR[\Amn]}$, and
$\Um\Sm\Vmi$ is the SVD of $\Xm=\DGL[\Amn]\cdot\Amn\cdot\DGR[\Amn]$.
The diagonal elements of $\Sm$ are referred to as the 
{\em unit-invariant} (UI) {\em singular values} of $\Amn$. 
\end{definition}

Given the UI-SVD of a matrix $\Amn$
\begin{equation}
      \Amn ~=~ \Dm\cdot\Um\Sm\Vmi\cdot\Em
\end{equation}
the UC generalized inverse of $\Amn$
can be expressed as
\begin{equation}
     \Aginv ~=~ \Emi\cdot\Vm\tinv{\Sm}\Umi\cdot\Dmi.
\end{equation}
Unlike the singular
values of $\Amn$, which are invariant with respect
to arbitrary left and right unitary transformations of
$\Amn$, the UI singular values
are invariant with respect to arbitrary left and right nonsingular
diagonal transformations\footnote{A {\em left} 
UI-SVD can be similarly defined as $\Amn  =  \Dm\cdot\Um\Sm\Vmi$
with $\Dm=\inv{\DL[\Amn]}$ and 
$\Um\Sm\Vmi$ being the SVD of $\Xm=\DL[\Amn]\cdot\Amn$.
The resulting {\em left unit-invariant singular values} are
invariant with respect to nonsingular left diagonal transformations
and right unitary transformations of $\Amn$ (the latter property
is what motivated the right unitary-invariance requirement
of Definition~\ref{defdl} and the consequent use of a
unitary-invariant norm in the construction of
Lemma~\ref{dconst}.). A {\em right}
UI-SVD can be defined analogously to interchange the
left and right invariants.}. Thus, functions of the unit-invariant
singular values are unit-invariant with respect to $\Amn$.

The largest $k$ singular values of a matrix 
(e.g., representing a photograph, video sequence,
or other object of interest) can be used to
define a unitary-invariant 
signature~\cite{czt12,gjmeyer03,hong91,luochen94,jeonglee06,jeongle09}
which supports computationally efficient similarity testing.
However, many sources of error in practical applications
are not unitary. As a concrete example, 
consider a system in which
a passport or driving license is scanned to produce
a rectilinearly-aligned and scaled image that is to 
be used as a key to search an existing image database.
The signature formed from the largest $k$
unit-invariant singular values can be used for
this purpose to provide robustness to amplitude
variations among the rows and/or columns of the
image due to the scanning process (details are
provided in the example of Section~\ref{application2}).

The UI-SVD may also offer advantages
as an alternative to the conventional 
SVD, or truncated SVD, used by existing methods
for image and signal 
processing, cryptography, digital watermarking, 
tomography, and other applications in order to provide  
state-space or coordinate-aligned robustness to 
noise\footnote{In applications in which the signature 
provided by the set of UI singular values may be too 
concise~\cite{ttw03}, e.g., because it is permutation 
invariant, a vectorization of the matrix 
$\Amn\circ(\Aginv)^{\mbox{\tiny T}}$
can be used as a more discriminating UI
signature.}.

More generally, the approach used to define
unit scale-invariant singular values can be applied
to other matrix decompositions, though the invariance
properties may be different. In the case of 
scale-invariant eigenvalues for square $\Amn$, i.e.,
eig[$\DGL[\Amn]\cdot\Amn\cdot\DGR[\Amn]$], the
invariance is limited to diagonal transformations
$\Dm\Amn\Em$ such that $\Dm\Em$ is
nonnegative real, e.g., $\Dmp\Amn\Emp$, 
$\Dm\Amn\Dm$ (or $\Dm\Amn\bar{\Dm}$ for
complex $\Dm$), and $\Dm\Amn\Dmi$. In applications
in which changes of units can be assumed to take the
form of positive diagonal transformations, the 
scale-invariant (SI) eigenvalues can therefore be 
taken as a complementary or alternative signature 
to that of provided by the UI singular values.

\section{UI Example: Scanned-Key Image Retrieval}
\label{application2}

Most flatbed image scanners produce a digital copy of an image 
by mechanically moving a linear light source, e.g., a 
flourescent tube, horizontally across a given document/image
while continuously recording the amplitude of the 
reflected light using a CDD. The fidelity of
such a scanner can degrade over time as oil residue 
and dust accumulate along the surface of the light
source and as mechanical components begin to wear.
In particular, variations in illumination along the length 
of the light source produce horizontal amplitude 
artifacts in the resulting image while variations in the 
distance of the light source to the document due to
non-smooth motion during the scanning process  
produces vertical amplitude artifacts.

Robustness to row and column amplitude perturbations 
i.e., multiplicative noise, may be needed for
reliable image retrieval when scanned images are
to be used as keys for searching an image database. 
One mechanism for mitigating such artifacts is to 
associate a signature with every image in the form
of a vector of its largest $k$ normalized 
sorted singular values (NSVs). If $k=5$, for 
example, the NSV signature would be a vector of length 
$5$ consisting of the first five singular values.
The vector is normalized to have unit magnitude so
that the signature is invariant with respect
to a uniform scaling of pixel intensities (amplitudes),
which is necessary to accommodate global amplitude
differences among different scanners. This normalization
also provides robustness with respect to image 
decimation and super-resolution, i.e., the NSV of a 
given image will tend to be similar to the NSV of the
same image scaled to a lower or higher resolution.

Because NSV keys are nonnegative vectors, they can 
be compared using the angular distance 
metric~\cite{cossim}:
\begin{equation}
   \adist{\pv}{\qv} ~\doteq~ \frac{1}{\pi}
         \cos^{-1}\left[\frac{\pv\cdot\qv}{\norm{\pv}\norm{\qv}}\right]
\end{equation}
which simplifies in the present context to
\begin{equation}
   \adist{\pv}{\qv} ~\doteq~ \frac{1}{\pi}
         \cos^{-1}[\pv\cdot\qv]
\end{equation}
because NSV vectors are defined to have unit magnitude.
The use of a metric comparison function is needed to
permit efficient retrieval when the image database is 
implemented using a metric search 
structure~\cite{arnab,samet}.

Figure~3 shows source images: LENA, RUSSELL,
and JARRY, followed by their respective query 
images: $\sim$LENA, $\sim$RUSSELL, and $\sim$JARRY, 
which have been 
corrupted with the same set of horizontal and vertical 
variations in amplitude. Letting their respective 
5-element NSV signatures
be denoted as $\Lnsv$, $\Rnsv$, $\Jnsv$, $\LXnsv$,
$\RXnsv$, and $\JXnsv$, the effect of the scanning
artifacts can be quantified as:
\begin{eqnarray}
    \adist{\Lnsv}{\LXnsv} & = & 0.009  \\
    \adist{\Rnsv}{\RXnsv} & = & 0.005  \\
    \adist{\Jnsv}{\JXnsv} & = & 0.003
\end{eqnarray}
where a distance of zero would imply that the keys
are identical and a distance of $1$ would imply that
they are orthogonal. 

\begin{figure}[h] 
  \centering
\begin{tabular}{cccccc}
  {\footnotesize {\em LENA}} & 
  {\footnotesize {\em RUSSELL}} & 
  {\footnotesize {\em JARRY}} &
  {\footnotesize {\em $\sim$LENA}} & 
  {\footnotesize {\em $\sim$RUSSELL}} & 
  {\footnotesize {\em $\sim$JARRY}} \\
  \includegraphics[width=0.85in,keepaspectratio]{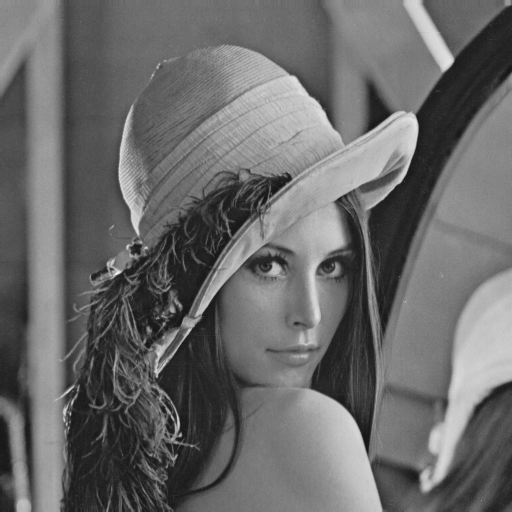} ~  & ~
  \includegraphics[width=0.85in,keepaspectratio]{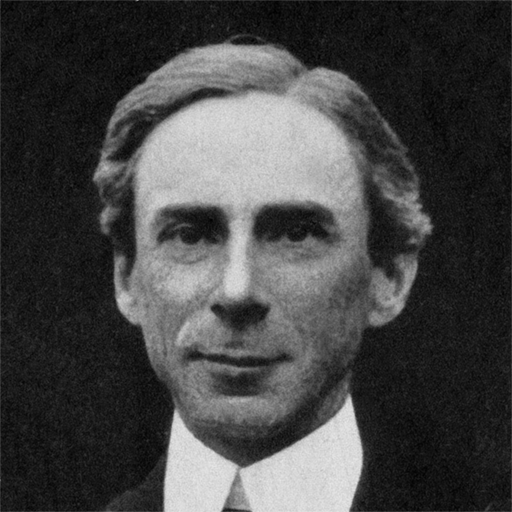} ~  & ~
  \includegraphics[width=0.85in,keepaspectratio]{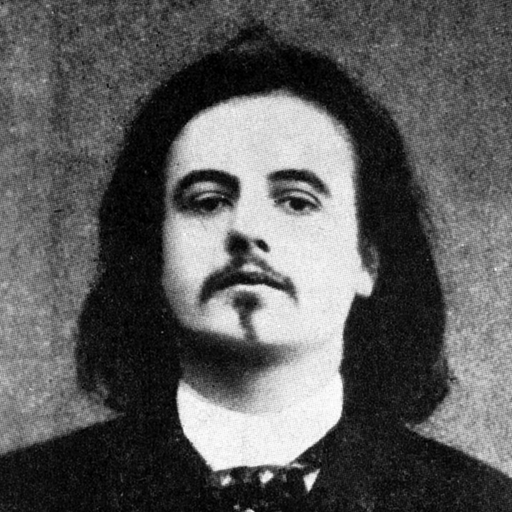} ~  & ~ 
  \includegraphics[width=0.85in,keepaspectratio]{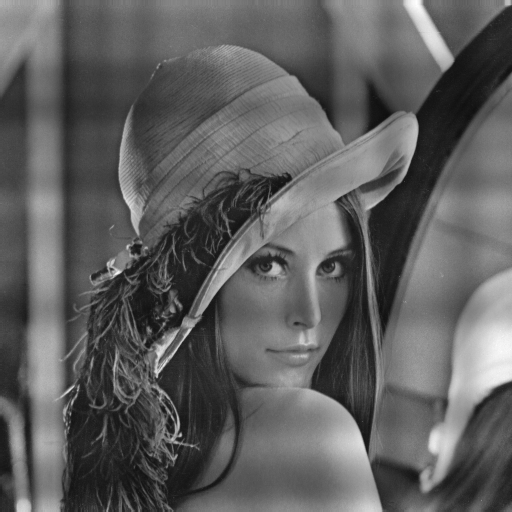} ~  & ~
  \includegraphics[width=0.85in,keepaspectratio]{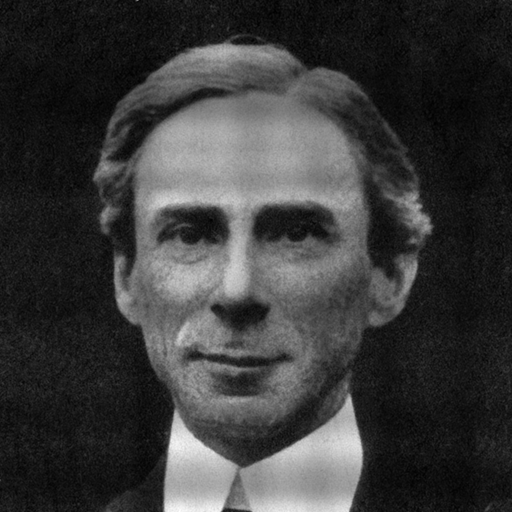} ~  & ~
  \includegraphics[width=0.85in,keepaspectratio]{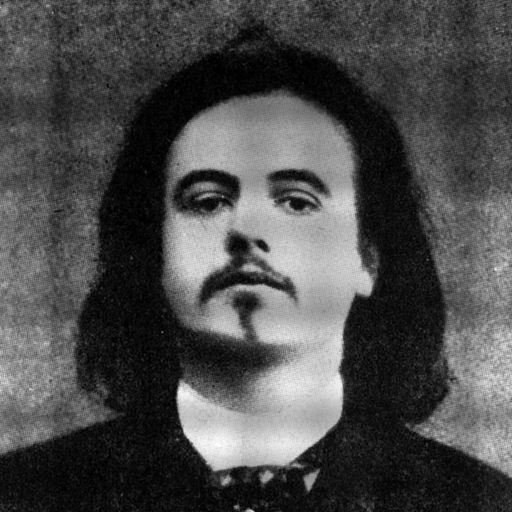} ~ ~ \\ 
\end{tabular}
  \caption{The first three images (left to right) represent high-quality 
     original source images, e.g., as might be stored in an image 
     database, followed by versions of each that have
     been corrupted with horizontal and vertical scaling artifacts.
     The three corrupted images were subjected to the same
     horizontal and vertical scaling process, e.g., as might be
     expected if obtained from the same poor-quality flatbed
     scanner.}
  \label{fig:Figure3}
\end{figure}

\noindent Unit-invariant singular values are 
invariant with respect to the scaling of rows and columns 
of a matrix and therefore should be invariant with respect
to the scanning artifacts depicted in Figure~3.  
This motivates an alternative
to NSVs in which singular values are replaced 
with UI singular values. As should be expected, the resulting 
UNSV keys are significantly less sensitive to amplitude
artifacts and are not identically zero only because 
of noise due to 8-bit discretization of pixel values:
\begin{eqnarray}
    \adist{\Lunsv}{\LXunsv} & = & 1.48\times 10^{-5}~  \\
    \adist{\Runsv}{\RXunsv} & = & 3.41\times 10^{-5}~  \\
    \adist{\Junsv}{\JXunsv} & = & 2.73\times 10^{-4}.
\end{eqnarray}
However, the effectiveness of a signature must be assessed
in terms of both specificity and discrimination. In this
example the latter can be seen by examining the extent to
which each source image can be distinguished from 
signatures derived from a different image. The following
shows the distance of the NSV signature for each source image
to the NSV signatures obtained from scans of different 
(i.e., non-matching) images:
\begin{eqnarray}
    \adist{\Lnsv}{\RXnsv} & = & 0.021 \\
    \adist{\Lnsv}{\JXnsv} & = & 0.048 \\
    \adist{\Rnsv}{\LXnsv} & = & 0.020 \\
    \adist{\Rnsv}{\JXnsv} & = & 0.038 \\
    \adist{\Jnsv}{\LXnsv} & = & 0.048 \\
    \adist{\Jnsv}{\RXnsv} & = & 0.036 
\end{eqnarray}
as compared to those obtained from UNSV signatures:
\begin{eqnarray}
    \adist{\Lunsv}{\RXunsv} & = & 0.080~ \\
    \adist{\Lunsv}{\JXunsv} & = & 0.171~ \\
    \adist{\Runsv}{\LXunsv} & = & 0.080~ \\
    \adist{\Runsv}{\JXunsv} & = & 0.108~ \\
    \adist{\Junsv}{\LXunsv} & = & 0.171~ \\
    \adist{\Junsv}{\RXunsv} & = & 0.109. 
\end{eqnarray}
The UNSV signatures provide much larger differences
in distance between correct and incorrect pairings than
NSV signatures. For example, the UNSV signatures are
much more effective at distinguishing the image of 
British mathematician Bertrand Russell from that of 
French absurdist Alfred Jarry. These differences are 
critical because the choice of tolerance $\epsilon$ 
for searching the database must also accommodate 
image perturbations other than amplitude variations. 
Thus, if images are not sufficiently distinguished 
the retrieval process will return a large fraction 
of the images in the database as being potential 
matches.

The artifacts in this example impact the 
effectiveness of NSV signatures in terms of
both specificity and discrimination. In the 
case of specificity, the distance between
the source image LENA and its scanned
counterpart, $\sim$LENA, is relatively large:
\begin{eqnarray}
    \adist{\Lnsv}{\LXnsv} & = & 0.009 
\end{eqnarray}
and this is due entirely to the presence 
amplitude deviations. Because these same
artifacts are present in every scan, the
NSV signatures for LENA and $\sim$RUSSELL
interpret the artifacts as being features
that the two images have in common, hence
the distance between them -- i.e., the
ability to discriminate them -- is reduced:
\begin{eqnarray}
    \adist{\Lnsv}{\RXnsv} & = & 0.021\,. 
\end{eqnarray}
More specifically, the NSV distance of LENA to 
$\sim$LENA
and the NSV distance of LENA to $\sim$RUSSELL
differ by only $0.012$, whereas the
corresponding UNSV difference is almost a 
factor of $7$ larger.

SVD-based keys can of course only be used
for coarse-grain similarity and discrimination 
testing, e.g., as query keys for searching
a database, but they provide a good example
of how consistency and invariance considerations
can be applied to mitigate the effects of a
known source error such as multiplicative 
noise\footnote{It should be noted that 
invariance with respect to additive noise
can be homomorphically~\cite{smithbook} 
obtained from the
elemental logarithms of the diagonal scaling
values used to compute the UI singular
values.}. It must be emphasized that the 
method guarantees invariance with respect 
to row/column multiplicative noise artifacts, 
so the example of the three images was needed
only to illustrate this fact.

\section{Discussion}
\label{discsec}

A stated purpose of this paper is to promote unit consistency
as a system design principle so that the functional integrity of 
complex systems can be sanity-checked in a manner that is 
relatively general and application independent. 
For this principle to be 
applied in practice it is necessary to establish that 
unit-consistent and/or unit-invariant methods exist to
address a broad range of real-world engineering problems.
It is of course impossible to enumerate and consider
every kind of problem, but it has been argued that 
the techniques applied to develop unit-consistent 
generalized matrix inverses, unit-invariant SVD, etc., 
are applicable to a wide variety of problems for 
which unit consistency is commonly sacrificed through 
the reflexive use of least-squares
and other non-UC optimization criteria.

~\\
\noindent At a high level, consistency testing of a system can 
be summarized most generally as follows:
\begin{enumerate}
\item {\em A valid though otherwise arbitrary input $\xb$ is 
     provided to the system to produce an output} $\yb$:
     \begin{equation}
        \xb ~\rightarrow~ \fbox{\mbox{SYSTEM}}~\rightarrow~\yb.
     \end{equation}
\item {\em If the system is assumed to be consistent with respect 
     to some transformation $T(\xb)$ then}:
     \begin{equation}
        T(\xb) ~\rightarrow~ \fbox{\mbox{SYSTEM}}~\rightarrow~
        \yb'\neq T(\yb)~\implies~\mbox{\em Fault}.
     \end{equation}
\end{enumerate}
Of course the practical
application of unit-consistency testing to large-scale systems will
almost certainly have to accommodate modules
-- or even subsystems -- that functionally should
maintain unit consistency but do not. For example,
a module for calculating an orbital trajectory may require
inputs to be provided in specific units because 
internally-used gravitational and other constants 
are defined in those units. To test a system that includes
such a module it is necessary to implement a wrapper to
convert from the testing coordinates to the module-required 
coordinates and back again. This expenditure of 
effort can be justified in that it 
allows the non-UC character of the module to be explicitly 
recognized before testing rather than being identified 
later following a failed unit-consistency test of the 
system. In other words, the UC testing process 
facilitates the explicit identification of all sources of
unit inconsistency, whether before testing or as a
result of testing.

It must be emphasized that the motivation for 
enforcing unit consistency is not limited
to simply avoiding coordinate-mismatch faults,
e.g., like that which felled the Mars Climate 
Orbiter~\cite{mco}. Rather, it is to provide 
a means for identifying a broad range of 
design and implementation flaws for which a
violation of unit consistency is just a 
side-effect. 

More generally, it is hoped that a greater focus 
on the consistency properties
of engineering solutions will yield
additional performance and reliability benefits
across a diverse spectrum of applications.

\appendices

\section{Uniqueness of the UC Inverse}
\label{facta}

By virtue of the uniqueness of the Moore-Penrose inverse, the UC inverse from 
Theorem~\ref{genginv} is uniquely determined given a scaling $\Amn=\DGL\Xm\DGR$ 
produced according to Theorem~\ref{rzcor}. However, the positive diagonal matrices $\DGL[\Amn]$ 
and $\DGL[\Amn]$ are not necessarily unique, so there may exist distinct positive
diagonal matrices $\Dma$ and $\Dmb$ and $\Ema$ and $\Emb$ such that
\begin{equation}
     \Amn ~=~ \Dma\Xm\Ema ~=~ \Dmb\Xm\Emb. 
\end{equation}
What remains is to establish the uniqueness of $\Aginv$ in this case, i.e., that
\begin{equation}
    \Dma\Xm\Ema = \Dmb\Xm\Emb ~\implies~
   \Emia\Xpinv\Dmia = \Emib\Xpinv\Dmib.
\end{equation}

We begin by noting that if an arbitrary $m\times n$ matrix $\Amn$ has rank $r$ then it can be factored~\cite{big} as the product
of an $m\times r$ matrix $\Fm$ and an $r\times n$ matrix
$\Gm$ as
\begin{equation}
\Amn ~ = ~ \Fm\Gm
\end{equation}
The Moore-Penrose inverse can then be expressed in terms of
this rank factorization as
\begin{equation}
\Apinv ~ = ~ \Gmt\cdot\inv{(\Fmt\cdot\Amn\cdot\Gmt)}\cdot\Fmt \label{rfac}
\end{equation}
where $\Gmt$ and $\Fmt$ are the conjugate transposes
of $\Gm$ and $\Fm$. 

Because $\Dma\Xm\Ema=\Dmb\Xm\Emb$ implies
\begin{equation}
     \Xm ~=~ \Dmib\Dma\Xm\Ema\Emib 
\end{equation}
then from the rank factorization $\Xm=\Fm\Gm$ 
we can obtain an alternative factorization
\begin{equation}
     \Xm ~=~ \Fm'\Gm' ~=~ (\Dmib\Dma\Fm)(\Gm\Ema\Emib)
\end{equation}
from the fact that the ranks of $\Fm$ and
$\Gm$ are unaffected by nonsingular diagonal
scalings. Applying the rank factorization identity
for the Moore-Penrose inverse then yields
\begin{eqnarray}
     \Xpinv & = & \pinv{(\Fm'\Gm')} \\
~ & = & (\Gm\Ema\Emib)^*\cdot
           \inv{ \left( (\Dmib\Dma\Fm)^*\cdot\Xm\cdot (\Gm\Ema\Emib)^* \right) }
                        \cdot (\Dmib\Dma\Fm)^* \\
~ & = & \Ema\Emib\Gmt\cdot
                \inv{ \left( (\Fmt\Dmib\Dma)\Xm(\Ema\Emib\Gmt) \right) }
                        \cdot\Fmt\Dmib\Dma \\
~ & = & (\Ema\Emib)\cdot\Gmt\cdot
                \inv{ \left( \Fmt\cdot(\underline{\Dmib\Dma\Xm\Ema\Emib})\cdot\Gmt \right) }
                        \cdot\Fmt\cdot(\Dmib\Dma) \\
~ & = & (\Ema\Emib)\cdot\left(\Gmt\cdot
                \inv{ \left( \Fmt\Xm\Gmt \right) }\cdot
                        \Fmt\right)\cdot(\Dmib\Dma) \\
~ & = & \Ema\Emib\left(\underline{\Gmt\cdot\inv{(\Fmt\Xm\Gmt)}\cdot\Fmt}\right)\Dmib\Dma \\
~ & = & \Ema\Emib\Xpinv\Dmib\Dma 
\end{eqnarray}
which implies\footnote{Note that the diagonal matrices commute and are real so, e.g., 
$\Dm^*=\Dm$.} 
\begin{eqnarray}
\Emia\Xpinv\Dmia & = & \Emia\cdot\left(\Ema\Emib\Xpinv\Dmib\Dma\right)\cdot\Dmia \\
~ & = & (\Emia\Ema)\cdot\Emib\Xpinv\Dmib\cdot(\Dma\Dmia) \\
~ & = & \Emib\Xpinv\Dmib
\end{eqnarray}
and thus establishes that $\Emia\Xpinv\Dmia=\Emib\Xpinv\Dmi$ and therefore
that the UC generalized inverse $\Aginv$ is unique.

Using a similar but more involved application of rank factorization it can be 
shown that that the UC generalized matrix inverse satisfies
\begin{equation}
   \Aginv \cdot \ginv{(\Aginv)} \cdot \Aginv ~=~ \Aginv
\end{equation}
which is weaker than the uniquely-special property of the Moore-Penrose
inverse:
\begin{equation}
    \pinv{(\Apinv)}=\Amn. 
\end{equation}

\section{Alternative Constructions}
\label{altsca}

The proofs of Theorems~\ref{linvt} and and~\ref{ginvtp}
(and consequently Theorem~\ref{genginv}) do not actually 
require the general unitary consistency property of 
the Moore-Penrose inverse and instead 
only require diagonal unitary consistency, e.g.,
in Eqs.(\ref{lmpua})-(\ref{lmpub}) as
\begin{equation}
    \pinv{(\Dmu\Amn)} ~ = ~ \Apinv\Dmui
\end{equation}
and in Eqs.(\ref{gpinvua})-(\ref{gpinvub}) as   
\begin{equation}
   \pinv{(\Dmu\Amn\Emu)} ~ = ~ \Emui\Apinv\Dmui
\end{equation}
for unitary diagonal matrices $\Dmu$ and $\Emu$.
Thus, the Moore-Penrose inverse could be replaced
with an alternative which maintains the other 
required properties but satisfies this weaker 
condition in place of general unitary consistency.

Similarly, the scalings defined by
Lemmas~\ref{nonzerodef} and~\ref{gendef} 
are not necessarily the only ones
that may be used to satisfy the conditions of 
Definition~\ref{defdg}. More specifically,
Lemmas~\ref{nonzerodef} and~\ref{gendef}
define left and right nonnegative diagonal scaling functions
$\DGL[\Amn]$ and $\DGR[\Amn]$ satisfying
\begin{equation}
   \DGL[\Amn]\cdot\Amn\cdot\DGR[\Amn]  ~ = ~
   \DGL[\Dmp\Amn\Emp]\cdot\Dmp\Amn\Emp\cdot\DGR[\Dmp\Amn\Emp]
\end{equation}
for all positive diagonals $\Dmp$ and $\Emp$. Because the unitary
factors of the elements of $\Amn$ are unaffected by the nonnegative
scaling, the scalings can be constructed without loss of generality from
$\Abs[\Amn]$. If nonnegative $\Amn$ is square, irreducible, and has full 
support then such a scaling can be obtained by alternately normalizing the rows
and columns to have unit sum using the Sinkhorn iteration~\cite{sink64,sink67}.
The requirement for irreducibility stems from the fact that the process
cannot always converge to a finite left and right scaling. For example,
the matrix 
\begin{equation}
   \label{trimat}
   \begin{bmatrix}
      a & b \\ 0 &  c
   \end{bmatrix}
\end{equation} 
cannot be scaled so that the rows and columns sum to unity unless
the off-diagonal element $b$ is driven to zero, which is not possible
for any finite scaling. In other words, the Sinkhorn unit-sum
condition cannot be jointly satisfied with respect to both the set
of row vectors and the set of column vectors. What is needed,
therefore, is a measure of vector ``size'' that can be applied
within a Sinkhorn-type iteration but is guaranteed to converge to
a finite scaling\footnote{This definition and the
subsequently-defined instance, $s_{a,b}[\uv]$, may be of 
independent interest for analyzing properties
of low-rank subspace embeddings in high-dimensional vector 
spaces, e.g., infinite-dimensional spaces.}. 

\begin{definition}
For all vectors $\uv$ with elements from a normed division algebra, 
a nonnegative composable size function $s[\uv]$ is defined
as satisfying the following conditions for all $\alpha$:
\begin{eqnarray}
   s[\uv] & = & 0  ~ ~ \Leftrightarrow ~ ~ \uv ~ = ~ {\bf 0}\\
   s[\alpha\uv] & = & | \alpha | \cdot s[\uv] \\
   s[\bv] & = & 1 ~ ~ ~ ~\forall \bv \in \{0,1\}^n \\
   s[\uv] & = &  s[\uv \otimes \bv] ~ = ~ s[\bv \otimes \uv] ~ ~ ~ ~\forall \bv \in \{0,1\}^n-\znc 
\end{eqnarray}

\end{definition}
The defined size function provides a measure of scale that
is homogeneous, permutation-invariant, and invariant with respect to tensor expansions
involving identity and zero elements. More intuitively, however, $s[\uv]$ 
can be thought of as a ``mean-like'' measure taken over the magnitudes of the nonzero 
elements of $\uv$. With the imposed condition $s[\mbox{\bf 0}]\doteq 0$
the following instantiations can also be verified to satisfy the definition: 
\begin{eqnarray}
   \stimes[\uv] & \doteq & \left(\prod_{k\in S} |\uv_k| \right)^{1/|S|} ~ ~ ~ ~ ~ ~ ~ ~ ~  j\in S ~ ~ \mbox{iff} ~ ~ \uv(j)\neq 0 \\
   s_p[\uv] & \doteq & \norm{\uv}_p ~/~ |S|^{1/p}  ~ ~  ~ ~ ~ ~ ~ ~ ~ ~ ~ ~  j\in S ~ ~ \mbox{iff} ~ ~ \uv(j)\neq 0 \\
   s_{a,b}[\uv] & \doteq &  \left(\frac{\sum_i |\uv_i|^{a+b}}{\sum_i |\uv_i|^a}\right)^{1/b} ~ ~ ~ ~ ~ ~ ~ ~ ~ ~ ~ a>0,~b>0
\end{eqnarray}
The first case, $\stimes(\uv)$, is more easily interpreted as the geometric mean of the 
nonzero elements of $\uv$. Its application in a Sinkhorn-type iteration converges to a 
unique scaling in which the {\em product} of the nonzero elements in each row and column has unit
magnitude. If $a$, $b$, and $c$ are positive for the matrix 
of Eq.(\ref{trimat}) then the scaled result using $s_p[\uv]$ is
\begin{equation}
   \begin{bmatrix}
      1 & 1 \\ 0 &  1
   \end{bmatrix}
\end{equation} 
where the product of the nonzero elements in each row and column is unity and the
particular left and right diagonal scalings are determined by the values of 
$a$, $b$, and $c$. It can be shown that for all elemental nonzero 
matrices that the scaling produced 
using $\stimes(\uv)$ is equivalent to that produced by the 
constructions defined by Lemmas~\ref{nonzerodef} and~\ref{gendef}
and that the iteration is fast-converging.

The row/column conditions imposed by $s_p[\uv]$ can most easily be understood
in the case of $p=1$, for which it is equivalent to the mean of the absolute values of
the nonzero elements of $\uv$. In the case of $p=2$, if a vector $\vv$ is formed 
from the $m$  nonzero elements of $\uv$ then 
\begin{equation}
        s_2[\uv] ~ = ~  \norm{\vv}_2 ~/ ~m^{1/2}
\end{equation}
In the example of the $2\times 2$  matrix of Eq.(\ref{trimat}) the scaled result 
produced using $s_p[\uv]$ for any $p>0$ happens to be the same as that produced
using $\stimes(\uv)$. For nontrivial matrices, however, the results for different 
$p$ are not generally (nor typically) equivalent to each other or to that 
produced by $\stimes(\uv)$. 

The third size function, $s_{a,b}[\uv]$, satisfies the required conditions
without imposing special treatment of zero elements. In other words, it is a
continuous function of the elements of $\uv$ and would therefore 
appear to be a more natural choice for instantiating $\DGL[\Amn]$ and 
$\DGR[\Amn]$ for analysis purposes, e.g., in the limit as $a$ and $b$ 
go to zero where $s_{a,b}[\uv]\equiv\stimes[\uv]$. (It should be noted
that the homogeneity properties of $s_{a,b}[\uv]$ hold generally
for any $a$ and $b$ from a normed division algebra with $0^0\doteq 1$.)

\section{Implementations}
\label{code}

Below are basic Octave/Matlab implementations
of some of the methods developed in the paper.
Although not coded for maximum efficiency or numerical robustness, they should 
be sufficient for experimental corroboration of theoretically-established properties.\\
~\\
\noindent The following function computes $\Aginv$ for 
$m\times n$ real or complex matrix $\Amn$. It has complexity
dominated by the Moore-Penrose inverse calculation,
which is $O(mn\cdot\min(m,n))$.
\begin{verbatim}
function Ai = uinv(A)
    [S dl dr] = dscale(A);
    Ai = pinv(S) .* (dl * dr)';
end
\end{verbatim}

\noindent The following function evaluates the 
UC/UI singular values of the real or
complex matrix $\Amn$.
\begin{verbatim}
function s = usvd(A)
    s = svd(dscale(A));
end
\end{verbatim}

\noindent The following function evaluates the 
UC/UI singular-value decomposition of the $m\times n$ real or 
complex matrix $\Amn$.
\begin{verbatim}
function [D U S V E] = usv_decomp(A)
    [S dl dr] = dscale(A);
    D = diag(1./dl);  E = diag(1./dr);
    [U S V] = svd(S);
end
\end{verbatim}

\noindent The following function computes the unique
(up to unitary factors) scaled matrix
$\Sm=\DGL[\Amn]\cdot\Amn\cdot\DGR[\Amn]$
with diagonal left and right scaling matrices
$\DGL[\Amn]=\mbox{diag[dl]}$ and
$\DGR[\Amn]=\mbox{diag[dr]}$. It has 
$O(mn)$ complexity for $m\times n$ real or 
complex matrix $\Amn$.
\begin{verbatim}
function [S dl dr] = dscale(A)
    tol = 1e-15;    
    [m, n] = size(A);
    L = zeros(m, n);    M = ones(m, n);
    S = sign(A);   A = abs(A);
    idx = find(A > 0.0);
    L(idx) = log(A(idx));
    idx = setdiff(1 : numel(A), idx);
    L(idx) = 0; A(idx) = 0; M(idx) = 0;   
    r = sum(M, 2);   c = sum(M, 1);   
    u = zeros(m, 1); v = zeros(1, n);
    dx = 2*tol;  
    while (dx > tol)
        idx = c > 0;
        p = sum(L(:, idx), 1) ./ c(idx);
        L(:, idx) = L(:, idx) - repmat(p, m, 1) .* M(:, idx);
        v(idx) = v(idx) - p;  dx = mean(abs(p));
        idx = r > 0;
        p = sum(L(idx, :), 2) ./ r(idx);
        L(idx, :) = L(idx, :) - repmat(p, 1, n) .* M(idx, :);
        u(idx) = u(idx) - p;  dx += mean(abs(p));
    end    
    dl = exp(u);   dr = exp(v);
    S .*= exp(L);
end
\end{verbatim}


\end{document}